\documentclass{amsart}
\usepackage{amsfonts,dsfont}
\usepackage{amsmath}
\usepackage{amssymb}
\usepackage[a4paper,top=1.8 cm,bottom=1.8 cm,left=1.4cm,
right=1.4cm]{geometry}
\usepackage{color}
\usepackage{nccmath} 
\usepackage[shortlabels]{enumitem} 

\usepackage[hidelinks]{hyperref}
\usepackage{cite}

\newtheorem{theorem}{Theorem}
\newtheorem{corollary}[theorem]{Corollary}
\newtheorem{lemma}[theorem]{Lemma}
\newtheorem{proposition}[theorem]{Proposition}

\newtheorem{remark}[theorem]{Remark}

\theoremstyle{definition}
\numberwithin{equation}{section}
\numberwithin{theorem}{section}

\newcommand{\gI}{\mbox{GId}}

\newcommand{\F}{F\langle X\rangle}
\newcommand{\W}{W\langle X\rangle}

\newcommand{\spn}{\mbox{span}}
\newcommand{\UT}{UT_2}

\DeclareMathOperator{\sgn}{sgn}

\DeclareMathOperator{\gv}{\mbox{gvar}}
\DeclareMathOperator{\Id}{Id}
\DeclareMathOperator{\GId}{GId}

\DeclareMathOperator{\md}{mod}

\usepackage{xcolor}

\newcommand{\black}{\color{black}}

\begin{document}
	
	\title{The $2\times 2$ upper triangular matrix algebra and its generalized polynomial identities}
	
	\author[F.~Martino]{Fabrizio Martino}
	\address{Fabrizio Martino, Dipartimento di Matematica e Informatica, Università degli Studi di Palermo, via Archirafi 34, 90123, Palermo, Italy}
	\email{fabrizio.martino@unipa.it}
	
	\author[C.~Rizzo]{Carla Rizzo}
	\address{Carla Rizzo, Dipartimento di Matematica e Informatica, Università degli Studi di Palermo, via Archirafi 34, 90123, Palermo, Italy, \&  CMUC, Departamento de Matemática, Universidade de Coimbra, Largo D. Dinis, 3001-501, Coimbra, Portugal}
	\email{carlarizzo@mat.uc.pt}

    \thanks{This work was partially supported by GNSAGA - INDAM. The first author was partially supported by FFR2023 - Fabrizio Martino.
    The second author was partially supported by the Centre for Mathematics of the University of Coimbra - UIDB/00324/2020, funded by the Portuguese Government through FCT/MCTES}

	\keywords{polynomial identity,  generalized polynomial identity, codimension growth, polynomial growth, cocharacter}
	
	\subjclass[2020]{Primary 16R10, 16R50; Secondary 16P90, 20C30}
	
	
	\begin{abstract}
		Let $UT_2$ be the algebra of $2\times 2$ upper triangular matrices over a field $F$ of characteristic zero. Here we study the generalized polynomial identities of $UT_2$, i.e., identical relations holding for $UT_2$ regarded as $UT_2$-algebra. We determine the generator of the $T_{UT_2}$-ideal of generalized polynomial identities of $UT_2$ and compute the exact values of the corresponding sequence of generalized codimensions. Moreover, we give a complete description of the space of multilinear generalized identities in $n$ variables in the language of Young diagrams through the representation theory of the symmetric group $S_n$. Finally, we prove that, unlike the ordinary case, the generalized variety of $UT_2$-algebras generated by $UT_2$ has no almost polynomial growth; nevertheless, we exhibit two distinct generalized varieties of almost polynomial growth.
	\end{abstract}
	
	\maketitle
	
	\section{Introduction}
	
	Let $A$ be an associative algebra over a field $F$ of characteristic zero, $\F$ be the free algebra generated by the countable set $X=\{x_1,x_2,\ldots \}$ and $W$ be a unitary associative algebra over $F$. Then $A$ is called $W$-algebra if it has a structure of $W$-bimodule with some additional conditions. A generalized polynomial identity of $A$ is a polynomial $f(x_1,\ldots, x_n)$ of the free $W$-algebra $W\langle X\rangle$ that vanishes under all substitutions of the elements of $A.$ Roughly speaking, $f(x_1,\ldots, x_n)$ is a polynomial of $\F$  with  ``coefficients" in $W.$ Notice that such ``coefficients" may appear also between two variables. Clearly, these identities are a natural generalization of the ordinary polynomial ones arising when $W$ coincides with $F$. The set of all generalized polynomial identities $\GId(A)$ is a $T_W$-ideal of $\W,$ i.e., an ideal stable by endomorphisms of $\W,$ and one of the main problems is to find a set of generators of such $T_W$-ideal.
	
	The idea of generalized polynomial identities stems from the observation that sometimes when we study polynomials in matrix algebras, we want to focus on evaluations where certain variables are always replaced by specific elements. Therefore, it would be useful to have a theory that allows us to consider ``polynomials” whose coefficients can be taken from an algebra, instead of from a field. 
	
	Generalized identities first appeared in 1965 in Amitsur’s fundamental paper \cite{Amitsur1965} on primitive rings satisfying generalized polynomial identities. In 1969, Martindale developed this idea further and applied it to prime rings \cite{Martindale1969}. 
	Later, two generalizations were pursued: Martindale \cite{Martindale1972} and Rowen \cite{Rowen1975,Rowen1976,Rowen1977} investigated generalized polynomial identities involving involutions, while Kharchenko \cite{Kharchenko1975,Kharchenko1978,Kharchenko1979} explored generalized polynomial identities involving derivations and automorphisms. These two directions were further developed and studied by various authors (see \cite{BeidarMartindaleMikhalevm1996} and its bibliography).
	In recent years, in case $W=A$ is finite dimensional and the bimodule action is the natural left and right multiplication, Gordienko in \cite{Gordienko2010} proved the so-called Amitsur conjecture, i.e., the limit $
	\lim_{n\rightarrow +\infty} \sqrt[n]{gc_n(A)},
	$
	where $gc_n(A)$, $n\geq 1$, is the generalized codimension sequence, exists and is a non-negative integer called the generalized PI-exponent of $A.$ He also proved that the generalized exponent equals the ordinary one defined by mean of the ordinary codimension sequence $c_n(A).$ For what concern the general
	the problem of describing the concrete generalized identities of an algebra so far it has been achieved 
	only for the algebra $M_n(F)$ of $n \times n$
	full matrices for all $n\geq 1$ (see for example \cite{BresarSpenko2015}). 
	
	The codimension sequence of an algebra was introduced by Regev in \cite{Regev1972} and it measures the rate of growth of the multilinear polynomials lying in the corresponding $T$-ideal. In the same paper, Regev proved that if $A$ satisfies a nontrivial polynomial identity, i.e., it is a PI-algebra, then its codimension sequence $c_n(A)$, $n\geq 1$, is exponentially bounded. Later Kemer in \cite{Kemer1979} showed that the variety generated by the algebra $UT_2$ of $2\times 2$-upper triangular matrices is of almost polynomial growth, i.e., it has exponential growth of the codimensions but every proper subvariety has polynomial growth. Analogous results were proved in various settings such as varieties of group-graded algebras \cite{Valenti2002}, algebras with derivation \cite{GiambrunoRizzo2019}, special Jordan algebras \cite{Martino2019}. It is worth mentioning that in the case of algebras with involution, Mishchenko and Valenti in \cite{MishchenkoValenti2000} constructed out of $UT_2$ a suitable algebra generating a variety of almost polynomial growth. 
	
	Motivated by the above results, here we deal with the generalized polynomial identities of $UT_2$ and we investigate the growth of the generalized codimension sequence $gc_n(A)$ of any algebra $A$ lying in the generalized variety generated by $UT_2.$
	
	The paper is organized as follows. After a necessary background on the generalized identities involving basic definitions and preliminary settings given in Section \ref{sezionedue}, we describe in Section \ref{sezionetre} the $T$-ideal of generalized identities of $UT_2$ as $UT_2$-algebra finding its generator. In Section \ref{sezionequattro} we study the space of multilinear generalized identities of $UT_2$ of degree $n$ as a representation of the symmetric group $S_n,$ decomposing its character into irreducibles by computing the corresponding multiplicities. Finally, in Section \ref{sezionecinque}, we prove the main result of the paper, i.e., the generalized variety of $UT_2$-algebras generated by $UT_2$, $\gv(UT_2),$ has no almost polynomial growth but we are able to construct inside $\gv(UT_2)$ a subvariety of almost polynomial growth. Moreover, we present another variety of $UT_2$-algebras of almost polynomial growth of the codimensions that is not contained in $\gv(UT_2)$.

	\section{On generalized polynomial identities and $W$-algebras}\label{sezionedue}
	
	Throughout this paper $F$ will denote a field of characteristic zero and all the algebras will be associative and have $F$ as their underlying field.

	Given an algebra $W$, we say that an algebra $A$ is a \emph{$W$-algebra}, if $A$ is a $W$-bimodule such that, for any $w\in W$, $a_1,a_2\in A$,
	\begin{equation}\label{conditions W-algebra}
		w(a_1 a_2)=(wa_1)a_2, \ (a_1 a_2)w=a_1(a_2 w), \ (a_1 w) a_2= a_1 (w a_2).
	\end{equation}
	When $W=F$, a $W$-algebra is just an $F$-algebra, that is an algebra over the field $F$.
	Clearly, $W$ itself has a natural structure of $W$-algebra by taking the left and right $W$-actions to be the usual left and right multiplications of $W$. In general, this is not the only way to define a structure of $W$-algebra on $W$ itself; in fact, there might exist different left and right $W$-actions on $W$ itself that induces a structure of $W$-algebra (see for example Section \ref{sezionecinque}).
	
	For fixed $W$ the class of $W$-algebras is a variety in the sense of universal algebra and is nontrivial since it contains $W$ itself. Ideals of $W$-algebras (\emph{$W$-ideals}) are understood to be invariant under the bimodule action of $W$, and homomorphisms $\varphi:A\rightarrow B$ between $W$-algebras $A,B$ must satisfy $\varphi(wav)=w\varphi(a)v$ for $a\in A$, $w,v\in W$.
	
	\smallskip

	The variety of $W$-algebras contains the \emph{free (associative) $W$-algebra} $\W$, freely generated by the countably infinite set of variables $X:=\{x_1,x_2, \dots\}$ which satisfies the following universal property: given a $W$-algebra $A$, any map $X\rightarrow A$ can be uniquely extended to a homomorphism of $W$-algebras $\W\rightarrow A$.

	We can give the following combinatorial description of  $\W$.
	First notice that it is not restrictive to assume that $W$ is an unital algebra; in fact, if not, we can consider the unital algebra $W^+=W+ F1$ obtained from $W$ by adding the unit element $1$.
	So, given a basis $\mathcal{B}_W:=\{w_i\}_{i\in\mathcal I}$ of $W$ such that $w_0=1$, if  we identify $x_i=1 x_i = x_i 1$ for $i\geq 1$, then a basis of $\W$ is the following
	$$
	\mathcal{B}_{\W}:=\left\lbrace w_{i_0}x_{j_1}w_{i_1}x_{j_2}\cdots w_{i_{n-1}}x_{j_n}w_{i_{n}} \ | \ n\geq 1, \, j_1,\dots,j_n\geq 1,\, w_{i_0},\ldots,w_{i_{n}}\in\mathcal{B}_W\right\rbrace .
	$$
	The multiplication of two elements $
	w_{i_0}x_{j_1}w_{i_1}x_{j_2}\cdots w_{i_{n-1}}x_{j_n}w_{i_n}
	$ and $
	w_{k_0}x_{l_1}w_{k_1}x_{l_2}\cdots w_{k_{m-1}}x_{l_m}w_{k_{m}}
	$ of $\mathcal{B}_{\W}$ is given by
	first juxtaposition 
	$
	w_{i_0}x_{j_1}w_{i_1}x_{j_2}\cdots w_{i_{n-1}}x_{j_n}w_{i_{n}}w_{k_0}x_{l_1}w_{k_1}x_{l_2}\cdots w_{k_{m-1}}x_{l_m}w_{k_{m}}
	$
	and then expanding $w_{i_{n}}w_{k_0}= \sum_{p\in\mathcal I} \alpha_p w_p$, $\alpha_p\in F$.
	So, $\W$ is also understood as some sort of non-commutative polynomials with coefficients in $W$. Clearly, the free $W$-algebra is endowed with a $W$-bimodule action that satisfies relations \eqref{conditions W-algebra} determined by first juxtaposition
	$$
	w_k (w_{i_0}x_{j_1}w_{i_1}x_{j_2}\cdots w_{i_{n-1}}x_{j_n}w_{i_{n}}) w_l= w_k w_{i_0}x_{j_1}w_{i_1}x_{j_2}\cdots w_{i_{n-1}}x_{j_n}w_{i_{n}}w_l,
	$$
	and then expanding $w_k w_{i_0}$ and $w_{i_{n}}w_l$ in the given basis $\mathcal{B}_W$ of $W$, for $w_k, w_l\in \mathcal{B}_W$ and $w_{i_0}x_{j_1}w_{i_1}x_{j_2}\cdots w_{i_{n-1}}x_{j_n}w_{i_{n}}\in \mathcal{B}_{\W}$.
	The elements of the free $W$-algebra are called \emph{generalized $W$-polynomials} or simply \emph{generalized polynomials} when the role of $W$ is clear.
	A \emph{$T_W$-ideal} of the free $W$-algebra is an $W$-ideal which in addition is invariant under all algebra endomorphisms $\varphi$ of $\W$ such that $\varphi(wfv)=w\varphi(f)v$ for all $f \in \W$ and $w,v\in W$; by the universal property, under the endomorphisms that we call \emph{substitutions}, which send variables of $x_i\in X$ in elements of $\W$.
	
	\smallskip
	
	
	Given a $W$-algebra $A$, a generalized polynomial $f(x_1, \dots , x_n)\in \W$ is a \emph{generalized $W$-identity}, or simply \emph{generalized identity} if there is not ambiguity about $W$, of $A$ if $f(a_{1},\dots,a_{n})=0$ for any $a_1,\ldots,a_n\in A$, i.e., $f$ is in the kernel of every homomorphism from $\W$ to $A$. We denote by $\gI_W(A)$, or simply $\gI(A)$ when ambiguity does not arise, the set of differential identities of $A$, which is a $T_W$-ideal of the free $W$-algebra. Remark that in case $W=F$, then we are dealing with the ordinary polynomial identities.

	\smallskip
	
	For $n\geq 1$, we denote by $GP_n^W$, or simply $GP_n$, the vector space of \emph{multilinear generalized polynomials} with coefficient in $W$ in the variables $x_{1},\dots,x_{n}$, so that
	$$
	GP_n:=\spn_F\{w_{i_0}x_{\sigma(1)}w_{i_1} x_{\sigma(2)}\cdots w_{i_{n-1}}x_{\sigma(n)}w_{i_{n}} \ | \ \sigma\in S_{n} ,w_{i_0},\ldots,w_{i_{n}}\in \mathcal{B}_W \},
	$$
	where $S_n$ denotes the symmetric group acting on $\{1,\ldots,n\}$.
	As in the ordinary case, since $F$ has characteristic zero, a Vandermonde argument and the standard linearization procedure show that the $T_W$-ideal $\GId(A)$ is completely determined by its multilinear generalized polynomials (see \cite[Proposition 4.2.3]{Drenskybook}).
	We also consider the vector space
	$$
	GP_n(A):= \dfrac{GP_n}{GP_n \cap \gI(A)},
	$$
	and its dimension $gc_n(A):=\dim_F GP_n(A)$ is the $n$th \emph{generalized codimension} of $A$. Remark that if $W$ is a finite-dimensional algebra, then $gc_n(A)$ is finite for $n \geq 1$.
	
	The symmetric group $S_n$ acts naturally on the left on $GP_n$ by permuting the variables: for $\sigma\in S_{n}$, $\sigma(wx_{i}v)=wx_{\sigma(i)}v$. Since $GP_n\cap \GId(A)$ is stable under this $S_n$-action, the space $GP_n(A)$ is a left $S_{n}$-module and its character, denoted by $g\chi_{n}(A)$, is called the {\em $n$th generalized cocharacter} of $A$. Also, since the characteristic of $F$ is zero,
	$$
	g\chi_n(A)=\sum_{\lambda\vdash n} m_{\lambda}\chi_{\lambda},
	$$
	where $\lambda$ is a partition of $n$, $\chi_{\lambda}$ is the irreducible $S_{n}$-character associated to $\lambda$ and $m_{\lambda}\geq 0$ is the corresponding multiplicity.
	
	
	A variety of $W$-algebras generated by a $W$-algebra $A$ is denoted by $\gv_{W}(A)$, or simply $\gv(A)$, and is called {\em generalized $W$-variety}, or simply {\em generalized variety}, and $\GId(\mathcal{V}):=\GId(A)$. The {\em growth} of $\mathcal{V}= \gv(A)$ is the growth of the sequence $gc_{n}(\mathcal{V}):=gc_{n}(A)$, $n\geq 1$. We say that the generalized variety $\mathcal{V}$ has {\em polynomial growth} if $gc_{n}(\mathcal{V})$ is polynomially bounded and $\mathcal{V}$ has {\em almost polynomial growth} if  $gc_{n}(\mathcal{V})$ is not polynomially bounded but every proper generalized subvariety of $\mathcal{V}$ has polynomial growth.
	
	\smallskip

	In the last part of this section our focus will be on generalized polynomials that are trivial. Recall that a generalized polynomial $f\in \W$ is said {\em $W$-trivial}, or simply {\em trivial}, if $f=0$; otherwise $f$ is {\em $W$-nontrivial}, or simply {\em nontrivial}. Since determining whether a generalized polynomial is trivial is not always straightforward, we shall introduce some techniques and approaches that can help.
	
	Let $f=f(x_1,\dots,x_n)\in GP_n$ be a multilinear generalized $W$-polynomial in the variables $x_1,\dots,x_n$.  Given $\sigma\in S_n$,  we denote by $f_\sigma$ the sum of the
	monomials of $f$ in which the variables occur exactly in the order $x_{\sigma(1)}, x_{\sigma(2)},\dots, x_{\sigma(n)}$, and we call it a {\em generalized monomial} of $f$. If $A$ is a $W$-algebra, then $f$ is called {\em $A$-proper} if $f_\sigma\notin\GId(A)$ for some $\sigma\in S_n$. Clearly, if $f$ is $W$-proper, then it is $W$-nontrivial. In general, the converse is not always true. Although, when $W=F$ ``proper" and ``nontrivial" are synonymous.


	Now we shall focus on linear elements of $W\langle X\rangle$ in a single variable $x$, i.e., elements of $GP_1=\spn_F \{ w x v \, | \, w,v\in \mathcal{B}_W \}.$
	Let us introduce the following notation. Let $\mbox{End}_F( W)$ be the algebra of endomorphism of $A$ with product $\circ$ given by the usual composition of function. Denote by $ L,R : W \longrightarrow \mbox{End}_F( W)$ the operators of left and right multiplications, i.e., 
	for $w \in  W$, the \textit{left} (resp. \textit{right}) \textit{multiplication} by $w$ is the endomorphism $L_w :    W \longrightarrow  W$ (resp. $R_w :   W \longrightarrow W$) of $ W$ defined by 
	\begin{equation*}
		\begin{split}
			L_w(v):=w v \hspace*{1cm}
		\end{split}
		\begin{split}
			\Big(\mbox{resp. }  R_w(v):=v w \Big),
		\end{split}
	\end{equation*}
	for all  $v\in W$, and consider $L_W R_W:=\spn_F \{L_w \circ R_v\ | \ w,v\in \mathcal{B}_W\}\subseteq\mbox{End}_F(W)$.
    
	\begin{lemma}\label{lem: L_WR_W}
		The linear map $\varphi:L_W R_W \longrightarrow GP_1$ given by
		$$
		\varphi(L_w \circ R_v)=w x v,
		$$
		for any $w,v\in \mathcal{B}_W$, is an isomorphism.
	\end{lemma}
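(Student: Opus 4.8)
The plan is to prove the statement by producing an explicit inverse of $\varphi$ and thereby reducing the whole lemma to a single linear-independence assertion. Two structural facts frame the argument. First, specialising the combinatorial description of $\W$ to $n=1$ and to the single variable $x$, the family $\{wxv \mid w,v\in\mathcal B_W\}$ is a \emph{basis} of $GP_1$. Second, by its very definition $\{L_w\circ R_v \mid w,v\in\mathcal B_W\}$ is a spanning set of $L_WR_W$. Hence $\varphi$ carries a spanning set of its domain onto a basis of its codomain, so surjectivity is automatic and the only issues are the well-definedness and injectivity of $\varphi$.

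First I would introduce the candidate inverse $\psi\colon GP_1\to L_WR_W$ determined on the basis of $GP_1$ by $\psi(wxv)=L_w\circ R_v$. Because $\{wxv\}$ is a basis, $\psi$ is automatically a well-defined linear map, and it is manifestly onto $L_WR_W$. On the respective generating sets one checks at once that $\psi\circ\varphi$ and $\varphi\circ\psi$ act as the identity, so as soon as $\varphi$ is shown to be well defined the two maps are mutually inverse and the lemma follows. In this way all of the content is pushed into the well-definedness of $\varphi$.

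Next I would unwind what well-definedness of $\varphi$ actually requires. Since $\{wxv\}$ is a basis, $\sum_{w,v}\alpha_{w,v}\,wxv=0$ forces every $\alpha_{w,v}=0$; therefore $\varphi$ is well defined precisely when the operators $L_w\circ R_v$ (for $w,v\in\mathcal B_W$) are linearly independent in $\mathrm{End}_F(W)$. Equivalently, this is the injectivity of the structure map $W\otimes_F W^{\op}\to\mathrm{End}_F(W)$, i.e.\ the faithfulness of $W$ as a bimodule over itself. To verify it I would argue pointwise: a relation $\sum_{w,v}\alpha_{w,v}\,L_w\circ R_v=0$ means $\sum_{w,v}\alpha_{w,v}\,w\,u\,v=0$ for all $u\in W$; evaluating on each basis vector $u=w_k\in\mathcal B_W$ (in particular on $w_0=1$), re-expanding the triple products $w\,w_k\,v$ in $\mathcal B_W$, and comparing coordinates yields a homogeneous linear system in the $\alpha_{w,v}$ whose only solution should be the trivial one.

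I expect this last point to be the genuine obstacle. The triple products $w\,u\,v$ may collapse---distinct pairs $(w,v)$ can induce the same operator, or even the zero operator---so the passage from the operator relation back to $\alpha_{w,v}=0$ is exactly where the internal structure of $W$ is indispensable; phrased invariantly, the crux is the vanishing of the one-variable linear generalized identities of $W$, namely $GP_1\cap\GId(W)=0$. Once this independence is secured, $\varphi$ is well defined, the mutually inverse relations above apply, and $\varphi$ is an isomorphism.
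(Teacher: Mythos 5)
Your proposal is not a complete proof: it reduces the whole lemma to the claim that the operators $L_w\circ R_v$, $w,v\in\mathcal{B}_W$, are linearly independent in $\mathrm{End}_F(W)$ (equivalently, that $\varphi$ is well defined), and then leaves that claim unproved --- the argument ends with a homogeneous linear system whose only solution ``should be'' the trivial one. The construction of $\psi$ is sound (it is well defined because the monomials $wxv$ form a basis of $GP_1$, and it is clearly onto), and your equivalence between well-definedness of $\varphi$, independence of the $L_w\circ R_v$, and the condition $GP_1\cap\GId(W)=0$ is correct; but the crux is missing. Note also that this is not the paper's route: the paper takes well-definedness of $\varphi$ for granted, calls surjectivity clear, and proves injectivity by evaluation --- if $\sum_i\alpha_iw_ixv_i=0$ in $GP_1$, then substituting each $a\in W$ for $x$ gives $\sum_i\alpha_iw_iav_i=0$, i.e., $\sum_i\alpha_iL_{w_i}\circ R_{v_i}=0$. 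So the step at which you stall is exactly the step the paper never addresses.

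Moreover, that step cannot be carried out, because the independence is false for the algebra at hand. For $W=\UT$ with the paper's basis $\mathcal{B}_{\UT}=\{1,e_{22},e_{12}\}$, one checks that $e_{22}ae_{12}=0$ and $e_{22}ae_{22}=e_{22}a$ for every $a\in\UT$, so $L_{e_{22}}\circ R_{e_{12}}=0$ and $L_{e_{22}}\circ R_{e_{22}}=L_{e_{22}}\circ R_{1}$ in $\mathrm{End}_F(\UT)$; in fact the nine products $L_w\circ R_v$ span only the five-dimensional space generated by $\mathrm{id}$, $L_{e_{22}}$, $L_{e_{12}}$, $R_{e_{22}}$, $R_{e_{12}}$. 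On the other hand, $e_{22}xe_{12}$ and $e_{22}xe_{22}-e_{22}x$ are nonzero in $GP_1$ if, as asserted in Section \ref{sezionedue}, the monomials $wxv$ are linearly independent in $\W$; under that reading $\dim_F GP_1=9$ while $\dim_F L_WR_W=5$, so $\varphi$ is not well defined and no isomorphism can exist (consistently, Theorem \ref{basedelTideale} gives $gc_1(\UT)=5$, the dimension of $L_WR_W$, not of $GP_1$). What is true --- and is all the paper actually uses later, e.g.\ ``$e_{22}xe_{22}=e_{22}x$'' in Lemma \ref{conseguenze} --- is the statement your map $\psi$ yields: $\psi$ is a well-defined surjection with kernel $GP_1\cap\GId(W)$ (for $W$ acting on itself by multiplication), whence $GP_1/(GP_1\cap\GId(W))\cong L_WR_W$. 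In other words, Lemma \ref{lem: L_WR_W} (and with it Proposition \ref{Criterion trivial polynomials} and Corollary \ref{cor: No GI in 1 var}) is correct only if equality of linear generalized polynomials is understood modulo $GP_1\cap\GId(W)$ rather than in the free algebra with the basis of Section \ref{sezionedue}. So your attempt fails, but it fails at a step that is genuinely false: you have located an inconsistency in the statement itself, not a removable gap in your own argument.
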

	\begin{proof}
		Clearly, $\varphi$ is surjective. Now, let $\sum_{i=1}^m \alpha_i L_{w_i}\circ R_{v_i}\in \ker\varphi$, where $w_i,v_i\in \mathcal{B}_W $ and $\alpha_i\in F$ for $1\leq i \leq m$. Then $f=\sum_{i=1}^m \alpha_i w_ixv_i=0$, i.e., $f$ is a generalized polynomial $W$-trivial. As a consequence, if we consider $W$ as a $W$-algebra with the natural the left and right $W$-actions defined by multiplication, then it follows that $\sum_{i=1}^m \alpha_i w_iav_i=0$ for all $a\in W$, i.e.,  $\sum_{i=1}^m \alpha_i L_{w_i}\circ R_{v_i}=0$. Thus $\varphi$ is also injective, as required.
	\end{proof}
	
	As a direct consequence of Lemma \ref{lem: L_WR_W}, we have the following criterion that establishes whether a linear generalized polynomial in one variable is trivial or not. 
	\begin{proposition}\label{Criterion trivial polynomials}
		Let $f=\sum_{i=1}^m \alpha_i w_ixv_i\in GP_1$. Then $f$ is $W$-trivial if and only if $\sum_{i=1}^m \alpha_i L_{w_i} \circ R_{v_i}=0$.
	\end{proposition}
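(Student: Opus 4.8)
The plan is to read the equivalence directly off the isomorphism $\varphi$ constructed in Lemma \ref{lem: L_WR_W}, so the argument is essentially bookkeeping once that lemma is in hand. First I would upgrade the defining relation $\varphi(L_w\circ R_v)=wxv$, which Lemma \ref{lem: L_WR_W} states only for basis elements $w,v\in\mathcal{B}_W$, to arbitrary $w,v\in W$. This is immediate from bilinearity: both assignments $(w,v)\mapsto L_w\circ R_v\in L_W R_W$ and $(w,v)\mapsto wxv\in GP_1$ are $F$-bilinear, and $\varphi$ is $F$-linear, so expanding $w$ and $v$ in the basis $\mathcal{B}_W$ and applying $\varphi$ termwise yields $\varphi(L_w\circ R_v)=wxv$ for all $w,v\in W$.

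Next I would take the given polynomial $f=\sum_{i=1}^m \alpha_i w_i x v_i$, where now $w_i,v_i\in W$ are arbitrary, and set $g:=\sum_{i=1}^m \alpha_i L_{w_i}\circ R_{v_i}\in L_W R_W$. By the extended relation together with the linearity of $\varphi$, we have $f=\varphi(g)$. Since $\varphi$ is an isomorphism, and in particular injective, $f$ is $W$-trivial, that is $f=0$ in $GP_1$, precisely when $g\in\ker\varphi$, i.e.\ when $g=0$. This is exactly the assertion $\sum_{i=1}^m \alpha_i L_{w_i}\circ R_{v_i}=0$, which would complete the proof.

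There is no genuine obstacle here: the whole content has already been absorbed into Lemma \ref{lem: L_WR_W}, whose injectivity is the nontrivial ingredient. That injectivity is itself the substantive point, since it rests on interpreting the triviality of $\sum_i \alpha_i w_i x v_i$ as the vanishing of all evaluations $\sum_i \alpha_i w_i a v_i=0$ for $a\in W$, viewing $W$ as a $W$-algebra under ordinary multiplication, which forces the operator identity $\sum_i \alpha_i L_{w_i}\circ R_{v_i}=0$. The only minor care needed in the present proposition is the bilinear extension above, so that the criterion applies to coefficients $w_i,v_i$ ranging over all of $W$ rather than merely over the fixed basis $\mathcal{B}_W$.
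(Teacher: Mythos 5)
Your proof is correct and takes essentially the same route as the paper: there the Proposition is stated as a direct consequence of Lemma \ref{lem: L_WR_W}, with the equivalence read off from the injectivity of $\varphi$ exactly as you do. Your bilinearity step, extending $\varphi(L_w\circ R_v)=wxv$ from basis elements to arbitrary $w,v\in W$, simply makes explicit a point the paper leaves tacit, and is the right way to justify applying the criterion to general coefficients.
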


	\begin{corollary}\label{cor: No GI in 1 var}
		Let $W$ be $W$-algebra with the left and right actions defined by multiplication and $f\in \W$. If $f\in GP_1\cap \GId(W)$, then $f$ is $W$-trivial.
	\end{corollary}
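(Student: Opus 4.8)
The plan is to reduce the statement directly to the criterion of Proposition \ref{Criterion trivial polynomials}. Since $f\in GP_1$, I would first write it in the canonical form $f=\sum_{i=1}^m \alpha_i w_i x v_i$ with $\alpha_i\in F$ and $w_i,v_i\in\mathcal{B}_W$. The hypothesis is that $f\in\GId(W)$, where $W$ is regarded as a $W$-algebra through the natural left and right multiplications.

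The second step is to read the identity condition as a statement about operators in $\mbox{End}_F(W)$. For any $a\in W$, evaluating $f$ at $x=a$ gives $f(a)=\sum_{i=1}^m\alpha_i w_i a v_i$, and by the very definition of the left and right multiplication operators one has $w_i a v_i=(L_{w_i}\circ R_{v_i})(a)$. Hence the condition $f\in\GId(W)$ is equivalent to saying that $\left(\sum_{i=1}^m\alpha_i L_{w_i}\circ R_{v_i}\right)(a)=0$ for every $a\in W$, that is, $\sum_{i=1}^m\alpha_i L_{w_i}\circ R_{v_i}=0$ in $\mbox{End}_F(W)$.

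The final step is to invoke Proposition \ref{Criterion trivial polynomials}, which asserts that the vanishing of exactly this operator is equivalent to $f$ being $W$-trivial; thus $f=0$, as claimed. I do not expect any genuine obstacle here: all the substance is already packaged in the isomorphism $\varphi$ of Lemma \ref{lem: L_WR_W} and its reformulation, and the corollary amounts to the observation that ``being a generalized identity of $W$ with the multiplication action'' is literally the operator-vanishing condition. The only point requiring a little care is to respect the convention that in $L_{w}\circ R_{v}$ the right multiplication $R_v$ is applied first, matching the ordering fixed before the Lemma.
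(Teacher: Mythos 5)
Your proposal is correct and follows essentially the same route as the paper: write $f=\sum_{i=1}^m \alpha_i w_i x v_i$ in the basis, observe that $f\in\GId(W)$ means $\sum_{i=1}^m \alpha_i L_{w_i}\bigl(R_{v_i}(a)\bigr)=0$ for all $a\in W$, i.e., $\sum_{i=1}^m \alpha_i L_{w_i}\circ R_{v_i}=0$ in $\mbox{End}_F(W)$, and conclude by Proposition \ref{Criterion trivial polynomials}. The only cosmetic difference is that the paper splits off the trivial case where all $\alpha_i=0$, which your direct argument renders unnecessary.
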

	\begin{proof}
		Let $f=\sum_{i=1}^m \alpha_i w_ixv_i\in GP_1\cap \GId(W)$, where $w_i,v_i\in \mathcal{B}_W $ and $\alpha_i\in F$ for $1\leq i \leq m$. If $\alpha_i=0$ for all $1\leq i \leq m$, then $f$ is $W$-trivial. So, let us assume that $\alpha_i\neq 0$ for some $1\leq i\leq m$. Since $f\in \GId(W)$, $\sum_{i=1}^m \alpha_i w_i a v_i=0$ for all $a\in W$, i.e., $\sum_{i=1}^m \alpha_i L_{w_i} \big(R_{v_i}(a)\big)=0$ for all $a\in W$, and by Proposition \ref{Criterion trivial polynomials} $f$ is $W$-trivial.
	\end{proof}
	
	So, when we consider $W$ as $W$-algebra with the natural left and right $W$-actions defined by multiplication, then there are no nonzero linear generalized identities in one variable. It is important to notice that in case we are considering $W$ with the structure of $W$-algebra given by another action, then this result is not in general true (see Section \ref{sezionecinque}).

	\smallskip
	
	In what follows 
	we shall assume that $W=UT_2$, the algebra of $2 \times 2$ upper triangular matrices over $F$, i.e., we shall work in the class of $UT_2$-algebras. Also, we shall consider as a basis the set $\mathcal{B}_{UT_2}=\{1:=e_{11}+e_{22}, e_{22},e_{12}\}$, where $e_{ij}$'s are the standard matrix units.

	\section{Generalized polynomial identities of $UT_2$}\label{sezionetre}

	In this section we shall compute a basis for the $T$-ideal of generalized identities, and the corresponding codimension sequence, of $UT_2$ as $UT_2$-algebra with the left and right $UT_2$-actions given by the usual multiplication.
	
	Let $[x_1,x_2]:=x_1x_2-x_2x_1$ be the {\em commutator} of $x_1$ and $x_2$. Also, in what follows we use $[x_1,x_2,\ldots, x_k]$ to denote a left normed commutator. A straightforward computation shows that the following polynomial is a generalized polynomial identity of $UT_2:$
	\begin{equation}\label{Id UT_2}
		[x_1,x_2]- [x_1,x_2,e_{22}]\equiv 0.
	\end{equation}
	Also, it is a $UT_2$-nontrivial polynomial since it is $UT_2$-proper. Next, we find some consequences that we will use to reach our goal.
	
	\begin{lemma}\label{conseguenze}
		The following polynomials are generalized identities of $UT_2$ and consequences of  \eqref{Id UT_2}:
		\begin{equation*}
			e_{22}[x_1,x_2]; \ [x_1,x_2]- [x_1,x_2]e_{22}; \ [x_1,x_2][x_3,x_4]; \   [x_1,x_2]e_{12}; \ e_{12}[x_1,x_2].
		\end{equation*}
	\end{lemma}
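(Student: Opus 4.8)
The plan is to work entirely inside the $T_{UT_2}$-ideal generated by the polynomial in \eqref{Id UT_2}, exploiting the multiplicative relations among the basis elements $1,e_{22},e_{12}$ of $UT_2$ — namely $e_{22}^2=e_{22}$, $e_{22}e_{12}=0$ and $e_{12}e_{22}=e_{12}$ — together with the closure of a $T_{UT_2}$-ideal under the $UT_2$-bimodule action, under multiplication by arbitrary elements of $\W$, and under $F$-linear combinations. First I would expand the generator, using $[x_1,x_2,e_{22}]=[x_1,x_2]e_{22}-e_{22}[x_1,x_2]$, as
$$
f:=[x_1,x_2]-[x_1,x_2,e_{22}]=[x_1,x_2]-[x_1,x_2]e_{22}+e_{22}[x_1,x_2].
$$

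The key computation is to isolate $e_{22}[x_1,x_2]$. Multiplying $f$ on the right by $e_{22}$ and using $e_{22}^2=e_{22}$ collapses the two single-sided terms and leaves $fe_{22}=e_{22}[x_1,x_2]e_{22}$; multiplying on the left by $e_{22}$ instead gives $e_{22}f=2e_{22}[x_1,x_2]-e_{22}[x_1,x_2]e_{22}$. Adding the two, the double-sided term $e_{22}[x_1,x_2]e_{22}$ cancels and we obtain $e_{22}[x_1,x_2]=\tfrac12(e_{22}f+fe_{22})$, the first claimed consequence. The second consequence is then immediate, since $[x_1,x_2]-[x_1,x_2]e_{22}=f-e_{22}[x_1,x_2]$ is an $F$-combination of $f$ and a consequence.

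The remaining three follow by multiplying the first two consequences by basis elements or by commutators. For $[x_1,x_2]e_{12}$ I would right-act by $e_{12}$ on the second consequence: since $e_{22}e_{12}=0$, the term $[x_1,x_2]e_{22}e_{12}$ vanishes and only $[x_1,x_2]e_{12}$ survives. For $e_{12}[x_1,x_2]$ I would use $e_{12}=e_{12}e_{22}$ to write $e_{12}[x_1,x_2]=e_{12}\bigl(e_{22}[x_1,x_2]\bigr)$, i.e. the left $UT_2$-action by $e_{12}$ on the first consequence. Finally, for $[x_1,x_2][x_3,x_4]$ I would combine the second consequence right-multiplied by the polynomial $[x_3,x_4]\in\W$, namely $[x_1,x_2][x_3,x_4]-[x_1,x_2]e_{22}[x_3,x_4]$, with the first consequence (in the variables $x_3,x_4$) left-multiplied by $[x_1,x_2]$, namely $[x_1,x_2]e_{22}[x_3,x_4]$; their sum is exactly $[x_1,x_2][x_3,x_4]$.

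I expect no serious obstacle, as each step is a short computation; the only point requiring care is the symmetrization in the key step, where one must notice that the unwanted two-sided term $e_{22}[x_1,x_2]e_{22}$ produced by $e_{22}f$ is precisely what $fe_{22}$ contributes, so that it cancels upon addition. One should also record explicitly that all the manipulations used — the bimodule multiplications by $e_{22}$ and $e_{12}$, the multiplication by the commutator $[x_3,x_4]\in\W$, and the $F$-linear combinations — preserve membership in the $T_{UT_2}$-ideal generated by \eqref{Id UT_2}, which is exactly the meaning of ``consequence''.
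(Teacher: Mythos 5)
Your proof is correct, and for four of the five polynomials it coincides with the paper's argument: $[x_1,x_2]-[x_1,x_2]e_{22}$ is obtained by subtracting the first consequence from the generator, $[x_1,x_2][x_3,x_4]$ by right-multiplying $[x_1,x_2]-[x_1,x_2]e_{22}$ by $[x_3,x_4]$ and absorbing the term $[x_1,x_2]e_{22}[x_3,x_4]$ via $e_{22}[x_3,x_4]\equiv 0$, and the two $e_{12}$-identities by acting with $e_{12}$ on the left of $e_{22}[x_1,x_2]$ and on the right of $[x_1,x_2]-[x_1,x_2]e_{22}$, exactly as in the paper. The genuine difference is the key step, namely the derivation of $e_{22}[x_1,x_2]$. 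The paper computes only $fe_{22}=e_{22}[x_1,x_2]e_{22}$, where $f$ denotes the generator \eqref{Id UT_2}, and then passes from $e_{22}[x_1,x_2]e_{22}$ to $e_{22}[x_1,x_2]$ by invoking Proposition \ref{Criterion trivial polynomials}, i.e., the relation $e_{22}xe_{22}=e_{22}x$, which holds in the free $UT_2$-algebra itself and is not a consequence of \eqref{Id UT_2}. You avoid this entirely: computing both $fe_{22}=e_{22}[x_1,x_2]e_{22}$ and $e_{22}f=2e_{22}[x_1,x_2]-e_{22}[x_1,x_2]e_{22}$ and adding yields $e_{22}[x_1,x_2]=\tfrac{1}{2}\bigl(e_{22}f+fe_{22}\bigr)$, so the first consequence is produced using nothing but the closure of a $T_{UT_2}$-ideal under the bimodule action and $F$-linear combinations. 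What your variant buys is self-containedness, since it does not lean on the triviality criterion (the identification of one-variable generalized polynomials with operators in $L_WR_W$), which in the paper carries the weight of deciding when two generalized polynomials coincide; the only cost is a division by $2$, harmless in characteristic zero, whereas the paper's route is characteristic-free and uses the same tool (Proposition \ref{Criterion trivial polynomials}) that recurs throughout the rest of Section \ref{sezionetre}.
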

	\begin{proof}
		By acting on \eqref{Id UT_2} by $e_{22}$ from the right we get that $e_{22}[x_1,x_2]e_{22}\equiv 0$. By Proposition \ref{Criterion trivial polynomials} $e_{22}xe_{22}=e_{22}x$, then it follows that $e_{22}[x_1,x_2]\equiv 0$.
		Moreover, as a consequence of \eqref{Id UT_2} and $e_{22}[x_1,x_2]\equiv 0$, we obtain that $[x_1,x_2]- [x_1,x_2]e_{22}\equiv 0.$
		
		Also, by multiplying $[x_1,x_2]- [x_1,x_2]e_{22}\equiv 0$  by $[x_3,x_4]$ on the right and by using $e_{22}[x_1,x_2]\equiv 0$ we get $[x_1,x_2][x_3,x_4]\equiv 0.$

		Finally, the generalized identities $e_{12} [x_1,x_2]\equiv 0$ and $[x_1,x_2]e_{12}\equiv 0$ follow from $e_{22}[x_1,x_2]\equiv 0$ and $[x_1,x_2]- [x_1,x_2]e_{22}\equiv 0$, respectively, by acting by $e_{12}$ respectively from the left and the right.
	\end{proof}
	
	We are now in position to prove that the generalized polynomial \eqref{Id UT_2} span $\GId(UT_2)$ as $T$-ideal. 
	
	\begin{theorem}\label{basedelTideale}
		Let $UT_2$ be the $UT_2$-algebra with the action given by the right and the left multiplication. Then $\GId(UT_2)$ is generated, as $T_{\UT}$-ideal, by the following polynomial:
		$$
		[x_1,x_2]- [x_1,x_2,e_{22}].
		$$
		Moreover, $gc_n(UT_2) = (n+2)2^{n-1}+2.$
	\end{theorem}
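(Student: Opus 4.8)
The plan is to split the theorem into two parts: first proving that the single polynomial $[x_1,x_2]-[x_1,x_2,e_{22}]$ generates all of $\GId(UT_2)$ as a $T_{\UT}$-ideal, and then computing the exact generalized codimension sequence. For the generation claim, let $I$ denote the $T_{\UT}$-ideal generated by \eqref{Id UT_2}. Since \eqref{Id UT_2} is a generalized identity of $UT_2$, we have $I\subseteq\GId(UT_2)$, so the work is the reverse inclusion. By the multilinearization argument noted in the excerpt, it suffices to show that every multilinear generalized polynomial $f\in GP_n$ that is an identity of $UT_2$ lies in $I$. The strategy is to reduce $f$ modulo $I$ to a normal form and then show that a nonzero normal-form element cannot be an identity of $UT_2$.

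To build the normal form, I would first establish that modulo $I$ every multilinear generalized polynomial can be rewritten so that no commutator appears except possibly as a leftmost factor of a very restricted type. The consequences collected in Lemma \ref{conseguenze}---namely $e_{22}[x_1,x_2]\equiv 0$, $[x_1,x_2]-[x_1,x_2]e_{22}\equiv 0$, $[x_1,x_2][x_3,x_4]\equiv 0$, $[x_1,x_2]e_{12}\equiv 0$, $e_{12}[x_1,x_2]\equiv 0$---do almost all of the rewriting. Using the standard commutator identity $x_ix_j=x_jx_i+[x_i,x_j]$ together with \eqref{Id UT_2} to replace each raw commutator $[x_i,x_j]$ by $[x_i,x_j,e_{22}]$, one pushes all variables into ascending order while accumulating commutators; the vanishing of products of commutators and of commutators multiplied by $e_{22}$ or $e_{12}$ on either side kills every term except those in which at most one commutator survives and the surrounding coefficients are tightly constrained. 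The outcome should be a spanning set for $GP_n \bmod I$ consisting of the ordered monomials $w_{i_0}x_1 w_{i_1}\cdots x_n w_{i_n}$ together with a small family involving a single commutator, where the admissible coefficients $w_{i_j}\in\{1,e_{22},e_{12}\}$ are restricted by the relations of Lemma \ref{conseguenze}. I then verify, by explicit evaluation on the basis $\{e_{11},e_{22},e_{12}\}$ of $UT_2$, that the resulting normal-form elements are linearly independent modulo $\GId(UT_2)$, which forces $\GId(UT_2)\subseteq I$ and simultaneously exhibits a basis of $GP_n(UT_2)$.

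For the codimension count, the cleanest route is to carry out the dimension count directly on the normal form produced above rather than invoking the generator abstractly. Concretely, I would count the surviving ordered generalized monomials: an ordered product $w_{i_0}x_1 w_{i_1} x_2 \cdots x_n w_{i_n}$ has $n+1$ coefficient slots, but the relations force $e_{22}$ and $e_{12}$ to behave multiplicatively in a constrained way, so one reduces to counting admissible placements of $e_{22}$ and $e_{12}$ among the slots, yielding a contribution of the shape $(n+2)2^{n-1}$, and then adds the fixed number of independent single-commutator terms, which should account for the additive constant $2$. The precise bookkeeping---keeping track of which coefficient patterns are identically zero on $UT_2$ versus which are genuinely independent---is where care is required, and it is best organized by evaluating the candidate basis against generic diagonal and strictly-upper-triangular substitutions to confirm independence.

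The main obstacle I expect is the completeness of the reduction: one must be sure that the five consequences in Lemma \ref{conseguenze}, applied repeatedly, really do collapse every multilinear generalized polynomial to the claimed normal form, with no ``hidden'' surviving monomials. In particular, because coefficients from $W$ can sit between variables and not merely at the ends, the rewriting must handle interior occurrences of $e_{22}$ and $e_{12}$ as well, and it is easy to overlook terms in which a commutator sits in the middle flanked by admissible coefficients. I would therefore treat the proof of the normal form as the technical heart, proving a rewriting lemma by induction on the number of inversions (to order the variables) and on the number of commutators, and only afterward perform the linear-algebra independence check that pins down the exact value $(n+2)2^{n-1}+2$.
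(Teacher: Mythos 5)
Your overall strategy coincides with the paper's proof: set $I$ equal to the $T_{\UT}$-ideal generated by $[x_1,x_2]-[x_1,x_2,e_{22}]$, reduce every multilinear generalized polynomial modulo $I$ to a normal form by combining Lemma \ref{conseguenze} with the one-variable relations $e_{22}xe_{22}=e_{22}x$, $e_{22}xe_{12}=0$, $e_{12}xe_{22}=e_{12}x$, $e_{12}xe_{12}=0$ and the ordinary identities of $UT_2$, then show the normal-form elements are linearly independent modulo $\GId(UT_2)$ by evaluations at $e_{11}$, $e_{22}$, $e_{11}+e_{22}$, $e_{12}$, and finally count. That skeleton is exactly the paper's.

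The genuine gap is in your normal form and in the count you extract from it. The surviving single-commutator terms are not ``a small family,'' and they do not account for the additive constant $2$: in the correct spanning set \eqref{base non identita} they are the elements $x_{l_1}\cdots x_{l_s}[x_k,x_{m_1},\ldots,x_{m_t}]$ (prefix increasing, $k>m_1<\cdots<m_t$) and $x_{p_1}\cdots x_{p_u}[x_{q_1},e_{22},x_{q_2},\ldots,x_{q_v}]$ (variables inside the commutator increasing, which requires the Jacobi identity to arrange), and there are $\sum_{r=2}^{n}\binom{n}{r}(r-1)+\sum_{r=0}^{n-1}\binom{n}{r}=n2^{n-1}$ of them, i.e., they carry the dominant part of the codimension. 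By contrast, the coefficient monomials contribute only $2^{n}+2$: the $2^{n}$ elements $x_{i_1}\cdots x_{i_r}e_{12}x_{j_1}\cdots x_{j_{n-r}}$ (indexed by an arbitrary subset of $\{1,\ldots,n\}$, hence not ``ordered monomials'' in your sense of $w_{i_0}x_1w_{i_1}\cdots x_nw_{i_n}$), plus the two monomials $x_1\cdots x_n$ and $e_{22}x_1\cdots x_n$, which are what produce the constant $+2$. Your attribution is exactly backwards, so the bookkeeping as you describe it would not return $(n+2)2^{n-1}+2$. Relatedly, an interior $e_{22}$ never survives as a coefficient of a monomial: $xe_{22}y=e_{22}xy+[x,e_{22}]y$, so such occurrences are absorbed into the commutator family, and the variables of a normal-form element are only partially ordered (inside a commutator the order is constrained, not free). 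Once the normal form is corrected, your independence argument via evaluations and the conclusion $I=\GId(UT_2)$ go through exactly as in the paper.
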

	\begin{proof}
		Let $I$ be the $T_{UT_2}$-ideal generated by the above polynomials. It is clear that $I\subseteq\GId(UT_2).$ In order to prove the opposite inclusion, let  $w$ be a monomial of $GP_n.$
		If $w$ does not contain any $e_{22}$ or $e_{12}$, i.e., it is an ordinary monomial, then, since $[x_1,x_2][x_3,x_4]\in I$ (Lemma \ref{conseguenze}) and by applying the well-known reduction process modulo the ordinary polynomial identities of $UT_2$ (see for instance \cite[Theorem 4.1.5]{GiambrunoZaicevbook}), $w$ can be written as a linear combination of $x_1 x_2\cdots x_n$ and
		$$
		x_{l_1}\cdots x_{l_m}[x_k,x_{p_1},\ldots, x_{p_{n-m-1}}],
		$$
		where $0\leq m\leq n-2,$ $l_1<\cdots < l_m$ and $k>p_1<\cdots <p_{n-m-1}.$ 
		
		Now, suppose that in $w$ appears at least one $e_{22}$. By Proposition \ref{Criterion trivial polynomials}, $e_{22}xe_{22}=e_{22}x,$ $e_{22}xe_{12}=0$ and $e_{12}xe_{22}=e_{12}x$, then we may assume that $w$ contains exactly one $e_{22}.$ By the  Poincarè-Birkhoff-Witt Theorem and by $e_{22}[x_1,x_2]\in I$ (Lemma \ref{conseguenze}), $w$ can be written as a linear combination of $e_{22}x_1x_2\cdots x_n$ and polynomials of the type
		$$
		x_{i_1}\cdots x_{i_r}c_1\cdots c_s,
		$$
		where $i_1<\cdots <i_r$ and $c_1,\ldots, c_s$ are left-normed commutators and just one of them contains $e_{22}.$ Since $[x_1,x_2][x_3,x_4]\in I$ (Lemma \ref{conseguenze}), then $s=2$ and one in between of the two commutators $c_1,c_2$ contains $e_{22}$. Now, since $e_{22}[x_1,x_2], \, [x_1,x_2]-[x_1,x_2]e_{22}\in I$ (Lemma \ref{conseguenze}), we may assume that $s=1$.
		Moreover, since $[x_1,x_2]-[x_1,x_2]e_{22}\in I$ (Lemma \ref{conseguenze}), then we can assume that $e_{22}$ appears in the second position of the commutator (otherwise we can erase $e_{22}$ and come back to the previous case of ordinary polynomials). Now, take the left-normed commutator $[x_k,e_{22},x_{j_1},\ldots, x_{j_s}]$ and notice that using the same reasoning as we did before, we may assume that $j_1<\cdots < j_s.$ Also, by the Jacobi identity $[x_2, e_{22},x_1] = [x_1, e_{22},x_2]-[x_1,x_2, e_{22}]$ it turns out that
		$$
		[x_2, e_{22},x_1]\equiv [x_1,e_{22},x_2] - [x_1,x_2] \ \text{(mod $I$)}.
		$$
		This implies that the left-normed commutator can be written as $[x_{i_1},e_{22},x_{i_2},\ldots, x_{i_s}]$ where $i_1<i_2<\cdots < i_s.$
		
		Finally, let $w$ be a monomial of $GP_n$ containing at least one $e_{12}.$  Again by Proposition \ref{Criterion trivial polynomials},
		$e_{12}xe_{12}= 0,$ $e_{22}xe_{12}=0$ and $e_{12}xe_{22}=e_{12}x$, then $w$ must contain just one $e_{12}.$ Moreover, since by Lemma \ref{conseguenze} $e_{12}[x_1,x_2], \,[x_1,x_2]e_{12}\in I,$ all the variables on the left and on the right of $e_{12}$ are ordered. Thus $w$ can be written modulo $I$ as
		$$
		x_{i_1}\cdots x_{i_r}e_{12}x_{j_1}\cdots x_{j_{n-r}},
		$$
		where $0\leq r\leq n,$ $i_1<\cdots <i_r$ and $j_1<\cdots <j_{n-r}.$

		By putting together all the previous remarks, we have proved that $GP_n$ is generated modulo $I$ by the polynomials:
		\begin{equation}\label{base non identita}
			\begin{split}
				&x_1\cdots x_n; \\
				&e_{22}x_1\cdots x_n; \\
				&X_{12}^{(\mathcal{I})} = x_{i_1}\cdots x_{i_r}e_{12}x_{j_1}\cdots x_{j_{n-r}}; \\
				&X^{(\mathcal{L},k)} = x_{l_1}\cdots x_{l_s}[x_k,x_{m_1},\ldots, x_{m_t}]; \\
				&X_{22}^{(\mathcal{P})} = x_{p_1}\cdots x_{p_u}[x_{q_1}, e_{22},x_{q_2},\ldots, x_{q_v}]
			\end{split}
		\end{equation}
		where $\mathcal{I}=\{i_1,\ldots, i_r\},$ $\mathcal{L}=\{l_1,\ldots, l_s\}$ and $\mathcal{P}=\{p_1,\ldots, p_u\}$ are subsets of $\{1,\ldots, n\},$ $i_1<\cdots < i_r,$ $j_1<\cdots <j_{n-r},$ $l_1<\cdots <l_s,$ $k>m_1<\cdots <m_t,$ $p_1<\cdots <p_u,$ $q_1<q_2<\cdots <q_v,$ $0\leq r\leq n,$ $t\geq 1$ and $v\geq 1.$
		\black
		
		Next we prove that these elements are linearly independent modulo $\GId(UT_2).$ To this end, let 
		$$
		f =\alpha_1x_1\cdots x_n+ \alpha_2e_{22}x_1\cdots x_n+\sum_{\mathcal{I}}\beta_{\mathcal{I}} X_{12}^{(\mathcal{I})}+\sum_{\mathcal{L},k}\gamma_{\mathcal{L},k}X^{(\mathcal{L},k)}+\sum_{\mathcal{P}}\delta_{\mathcal{P}} X_{22}^{(\mathcal{P})}
		$$
		be a linear combination of the generalized polynomials \eqref{base non identita} and suppose by contradiction that $f\neq 0.$ We shall make suitable evaluations to prove that $f=0$ and this will complete the proof.
		
		First, if we evaluate $x_1=\cdots = x_n= e_{11},$ then we get $\alpha_1e_{11}+\beta_{\mathcal{I}}e_{12}=0,$ where $\mathcal{I}=\{1,\ldots, n\},$ thus $\alpha_1 =  \beta_{\mathcal{I}}= 0.$ Now let us make the evaluation $x_1=\cdots = x_n = e_{22}.$ In this case we get $\alpha_2e_{22}+\beta_{\mathcal{I}'}e_{12}=0,$ where $\mathcal{I}'=\emptyset,$ so $\alpha_2 = \beta_{\mathcal{I}'}=0.$ For a fixed $\mathcal{I}=\{i_1,\ldots, i_r\},$ we set $x_{i_1}=\cdots = x_{i_r}= e_{11}$ and $x_{j_1}=\cdots = x_{j_{n-r}}= e_{22}$ and we get $\beta_{\mathcal{I}}e_{12}=0$ thus $\beta_{\mathcal{I}}=0.$ Now, for fixed $\mathcal{L}=\{l_1,\ldots, l_s\}$ and $k,$ from the evaluation $x_{l_1}=\cdots = x_{l_s}=e_{11}+e_{22},$ $x_k= e_{12}$ and $x_{m_1}= \cdots = x_{m_t}= e_{22},$ we get $\gamma_{\mathcal{L},k}e_{12}=0,$ thus $\gamma_{\mathcal{L},k}=0.$ Here remark that all the polynomials of the type $X_{22}^{(\mathcal{P})}$ evaluate to zero since in $X^{(\mathcal{L},k)}$ it must be $k>m_1<\cdots < m_t$ whereas in $X_{22}^{(\mathcal{P})}$ it must be $q_1<q_2<\cdots <q_v.$ Finally, for any fixed $\mathcal{P}=\{p_1,\ldots, p_u\},$ we make the substitution $x_{p_1}=\cdots = x_{p_u}= e_{11}+e_{22},$ $x_{q_1}=e_{12}$ and $x_{q_2}= \cdots = x_{q_v}= e_{22}$ and we get $\delta_{\mathcal{P}}e_{12}=0,$ that is $\delta_{\mathcal{P}}=0.$
		Therefore, all the scalars appearing in $f$ are zero, i.e., $f=0,$ a contradiction.
		
		Thus the elements in \eqref{base non identita} are linearly independent modulo $\GId(UT_2)$ and, since $GP_n\cap I\subseteq GP_n\cap \GId(UT_2),$ this proves that $\GId(UT_2)= I$ and the polynomials in \eqref{base non identita} are a basis of $GP_n$ modulo $GP_n\cap \GId(UT_2).$ Hence, by counting we get
		\begin{align*}
			gc_n(UT_2)& =  2+\sum_{r=0}^n\binom{n}{r}+\sum_{r=2}^n\binom{n}{r}(r-1)+\sum_{r=0}^{n-1}\binom{n}{r} = 2+\sum_{r=0}^n\binom{n}{r} +\sum_{r=1}^n r\binom{n}{r}-1-\sum_{r=0}^n\binom{n}{r}+2+\sum_{r=0}^{n}\binom{n}{r}-1\\
			&= (n+2)2^{n-1}+2.
		\end{align*}
	\end{proof}

	\section{Generalized cocharacter sequence of $UT_2$}\label{sezionequattro}
	
	In this section, we shall determine the generalized cocharacter of $UT_2$ as $UT_2$-algebra where the action of $UT_2$ as bimodule over itself is the usual product of $UT_2$.
	
	We shall start by proving some technical lemmas that give us a lower bound for the multiplicities $m_\lambda$ of $n$th generalized $UT_2$-cocharacter of $UT_2$
	\begin{equation}\label{eq: generalized cocharacter UT_2}
		g\chi_n(UT_2)= \sum_{\lambda\vdash n} m_\lambda \chi_\lambda.
	\end{equation}
	To this end recall that any irreducible left $S_n$-module $W_{\lambda}\subseteq GP_n$ with character $\chi_\lambda$ can be generated as $S_n$-module by an element of the form $e_{T_\lambda}f$, for some $f\in W_\lambda$ and
	some tableau $T_\lambda$ of shape $\lambda$. Here $e_{T_{\lambda}}=\sum_{\substack{\sigma\in R_{T_{\lambda}} \\ \tau\in C_{T_{\lambda}}}} (\sgn\tau)\sigma\tau$ is a minimal quasi-idempotent corresponding to $T_\lambda$, where $R_{T_{\lambda}}$ and $C_{T_{\lambda}}$ are the subgroups of $S_n$ stabilizing the rows and columns of $T_{\lambda}$, respectively.

	\begin{lemma}\label{lem: lower multiplicities row}
		$m_{(n)}\geq 2n+3$ in \eqref{eq: generalized cocharacter UT_2}.
	\end{lemma}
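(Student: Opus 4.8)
The plan is to read off $m_{(n)}$ as the dimension of the space of $S_n$-invariants of $GP_n(UT_2)$. Since $\lambda=(n)$ consists of a single row, the column stabilizer $C_{T_{(n)}}$ is trivial and $R_{T_{(n)}}=S_n$, so the quasi-idempotent reduces to the full symmetrizer $e_{T_{(n)}}=\sum_{\sigma\in S_n}\sigma$. Hence every copy of the trivial module $\chi_{(n)}$ inside $GP_n(UT_2)$ is generated by an element $e_{T_{(n)}}f$, and $m_{(n)}$ equals the number of linearly independent such elements modulo $\GId(UT_2)$. I would therefore apply $e_{T_{(n)}}$ to representatives of the basis \eqref{base non identita} and exhibit $2n+3$ linearly independent images. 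The elements $X^{(\mathcal{L},k)}$ symmetrize to zero, because each left-normed commutator $[x_k,x_{m_1},\ldots]$ is antisymmetric under the transposition swapping $x_k$ and $x_{m_1}$, and $e_{T_{(n)}}$ annihilates anything on which a transposition acts by $-1$; so they contribute nothing and can be ignored for a lower bound.

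From the surviving types I obtain the invariants $P:=\sum_{\sigma\in S_n}x_{\sigma(1)}\cdots x_{\sigma(n)}$ and $Q:=e_{22}P$; then, since symmetrizing $X_{12}^{(\mathcal{I})}$ depends only on the number $r=|\mathcal I|$ of variables to the left of $e_{12}$, the elements $R_r:=\sum_{\sigma\in S_n}x_{\sigma(1)}\cdots x_{\sigma(r)}\,e_{12}\,x_{\sigma(r+1)}\cdots x_{\sigma(n)}$ for $0\le r\le n$; and, since symmetrizing $X_{22}^{(\mathcal{P})}$ depends only on the length $v$ of the $e_{22}$-commutator, the elements (with $u=n-v$)
$$T_v:=\sum_{\sigma\in S_n}x_{\sigma(1)}\cdots x_{\sigma(u)}\,[x_{\sigma(u+1)},e_{22},x_{\sigma(u+2)},\ldots,x_{\sigma(n)}]$$
for $1\le v\le n$. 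This yields exactly $1+1+(n+1)+n=2n+3$ invariant elements, and it remains only to prove their linear independence modulo $\GId(UT_2)$.

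For the independence I would argue by evaluations, in the spirit of the proof of Theorem \ref{basedelTideale}. Setting all variables equal to $e_{11}$ (resp. $e_{22}$) leaves only $P\mapsto n!\,e_{11}$ and $R_n\mapsto n!\,e_{12}$ (resp. $P,Q\mapsto n!\,e_{22}$ and $R_0\mapsto n!\,e_{12}$), forcing the coefficients of $P,Q,R_0,R_n$ to vanish. To kill the coefficients of the intermediate $R_r$, I would evaluate with exactly $a$ variables equal to $e_{11}$ and $n-a$ equal to $e_{22}$: since a product of diagonal idempotents is nonzero only when they coincide, and $e_{11}e_{12}=e_{12}=e_{12}e_{22}$ while $e_{22}e_{12}=0=e_{12}e_{11}$, only $R_a\mapsto a!(n-a)!\,e_{12}$ survives, whereas $P,Q$ and every $T_v$ vanish (because $[e_{11},e_{22}]=[e_{22},e_{22}]=0$). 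The hard part will be separating the coefficients of the $T_v$, since all the previous evaluations annihilate them; here I would evaluate with one variable equal to $e_{12}$, exactly $v-1$ variables equal to $e_{22}$, and the remaining $n-v$ equal to the identity, and use $[e_{12},e_{22}]=e_{12}$ together with $[e_{12},1]=0$ and $e_{22}e_{12}=0$. The delicate bookkeeping is that every $T_{v'}$ with $v'\ne v$ then vanishes — for $v'>v$ there are too few copies of $e_{22}$ to keep the left-normed commutator alive, while for $v'<v$ a leftover $e_{22}$ multiplies the resulting $e_{12}$ to zero — so that only $T_v\mapsto (v-1)!(n-v)!\,e_{12}$ remains once $P,Q,R_r$ are already known to drop out. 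This forces all coefficients of the $T_v$ to vanish, establishing the independence of the $2n+3$ invariants and hence $m_{(n)}\ge 2n+3$.
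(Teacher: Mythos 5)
Your proof is correct, and it reaches the paper's bound by a route that differs in execution, though not in overall strategy. The paper also exhibits $2n+3$ elements affording $\chi_{(n)}$, namely $x^n$, $e_{22}x^n$, $x^{i-1}[x,e_{22}]x^{n-i}$ ($1\leq i\leq n$) and $x^je_{12}x^{n-j}$ ($0\leq j\leq n$), but it works with these \emph{one-variable} polynomials and proves their independence modulo $\GId(UT_2)$ by evaluating $x$ at the generic elements $\delta e_{11}+e_{22}$ and $\delta e_{11}+e_{22}+e_{12}$, separating coefficients via Vandermonde determinants, and only then passes to the complete linearizations to conclude $m_{(n)}\geq 2n+3$. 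You instead stay multilinear throughout: you identify $m_{(n)}$ with the dimension of the $S_n$-invariants of $GP_n(UT_2)$ (the image of the full symmetrizer, which is the correct characterization in characteristic zero), symmetrize the basis \eqref{base non identita} — noting correctly that the ordinary-commutator elements $X^{(\mathcal{L},k)}$ are killed by symmetrization, since they are alternating in $x_k,x_{m_1}$ — and then separate the surviving $2n+3$ invariants by assigning \emph{different matrix units to different variables}, which makes the linear system triangular and avoids any Vandermonde argument. Your commutator family $T_v$ (an ordered prefix times one long commutator $[x,e_{22},x,\dots,x]$, taken straight from \eqref{base non identita}) also differs from the paper's family $x^{i-1}[x,e_{22}]x^{n-i}$; both choices work, and you handled the one delicate point — that the final evaluation with $e_{12}$, copies of $e_{22}$ and copies of $1$ does \emph{not} annihilate $P$, $Q$, $R_r$, so their coefficients must be killed first — in the right order. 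What your version buys is a cleaner conceptual justification of why counting invariants computes $m_{(n)}$ and purely combinatorial matrix-unit evaluations; what the paper's version buys is one-variable generators of the isotypic components, which is the form re-used later (Remark \ref{rmk: linear indipendet hwv}, Lemma \ref{LemPolynomialGrowth}) in the study of subvarieties.
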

	\begin{proof}
		Let us consider the standard tableau
		\begin{equation*}
			T_{(n)}=\begin{array}{|c|c|c|c|}\hline
				1 & 2 & \cdots &n \\ \hline
			\end{array}\;,
		\end{equation*}
		and the following $2n+3$ generalized polynomials associated to it
		\begin{align}
			& a(x)=x^n  ,\label{eq: hwv a11}\\
			& a_{22}^{(0)}(x)= e_{22} x^n ,\label{eq: hwv a22 0}\\
			& a_{22}^{(i)}(x)=x^{i-1} [x, e_{22}] x^{n-i} , \quad 1\leq i \leq n\label{eq: hwv a22},\\
			& a_{12}^{(j)}(x)=x^j e_{12} x^{n-j}, \quad 0\leq j \leq n. \label{eq: hwv a12}
		\end{align}
		These polynomials are obtained from the quasi-idempotents corresponding to the tableau $T_{(n)}$ by identifying all the elements. 
		Clearly, the polynomials \eqref{eq: hwv a11}--\eqref{eq: hwv a12} do not vanish in $UT_{2}$. We claim that these generalized polynomials are linear independent modulo $\gI(UT_2)$. So, let $f\in \gI(\UT)$ be a linear combination of such polynomials, i.e.,
		$$
		f=\alpha a(x) + \sum_{i=0}^{n} \beta_i a_{22}^{(i)}(x) + \sum_{j=0}^{n} \gamma_j a_{12}^{(j)}(x).
		$$
		First suppose that $\alpha \neq 0$ or $ \gamma_n\neq 0$. Then, by making the evaluation $x=e_{11}$ one gets $\alpha e_{11}+ \gamma_n e_{12}=0$. Hence, it follows that $\alpha=\gamma_n=0$, a contradiction. 
		
		Now assume that $\beta_0\neq 0$ or $ \gamma_0\neq 0$.  Then, if we consider the evaluation $x= e_{22}$, we obtain  $\beta_0 e_{22}+ \gamma_0 e_{12}=0$. Hence, it follows that $\beta_0=\gamma_0=0$, a contradiction.
		
		Next, assume that there exists $\gamma_j\neq 0$ for some $1\leq j \leq n-1$. If we substitute $x=\delta e_{11}+e_{22}$ with $\delta\in F$, $\delta\neq 0$, we get $ \sum_{j=1}^{n} \delta^{j}\gamma_j=0$. Since $F$ is an infinite filed, we can choose $\delta_{1},\dots,\delta_{n-1}\in F$ such that $\delta_{i}\neq 0$ and $\delta_{i}\neq\delta_{j}$ for $1\leq i\neq j\leq n-1$. Then we get the following homogeneous linear system of $n-1$ equations in the $n-1$ variables $\gamma_1, \dots, \gamma_{n-1}$
		\begin{equation*}
			\label{eq: sistema 1 riga 1}
			\sum_{j=1}^{n-1}\delta_{k}^{j}\gamma_j=0,\quad 1\leq k \leq n-1.
		\end{equation*}
		Since the matrix associated to the above system is a Vandermonde matrix, it follows that $\gamma_j=0$, for any $1\leq j \leq n-1$, a contradiction.
		
		Finally, if $\beta_i\neq 0$ for some $1\leq i \leq n$, then by making the substitution $x=\delta e_{11} + e_{22} + e_{12}$, we get that $\sum_{i=1}^{n} \delta^i \beta_i=0$. Now, as above, one may choose distinct $\delta_1, \dots ,\delta_n \in F$ such that $\delta_i\neq 0$ for $1\leq i \leq n$. Hence, we obtain the following linear system of $n$ equations in the $n$ variables $\beta_1, \dots, \beta_n$
		\begin{equation*}
			\sum_{i=1}^{n}\delta_{k}^{i}\beta_i=0,\quad 1\leq k \leq n.
		\end{equation*}
		Again, we obtained a linear system whose associated matrix is a Vandermonde matrix. Thus, it follows that $\beta_i=0$ for any $1\leq i \leq n$, a contradiction. 
		Therefore the $2n+3$ generalized polynomials \eqref{eq: hwv a11}--\eqref{eq: hwv a12} are linearly independent modulo $\GId(\UT)$. 
		
		Notice that the complete linearization of $a(x)$ is $e_{{(n)}}(x_{1},\dots,x_{n})=e_{T_{(n)}}(x_{1}\cdots x_{n}),$ and, for every $0\leq i \leq n$, the complete linearization of $a_{22}^{(i)}(x)$ and $a_{12}^{(i)}(x)$ are the polynomials
		$e_{{(n)}}^{e_{22},i}(x_{1},\dots,x_{n})=e_{T_{(n)}}(x_{1}\cdots x_{i-1} [x_i, e_{22}] x_{i+1}\cdots x_{n})$ and 
		$e_{{(n)}}^{e_{12},i}(x_{1},\dots,x_{n})=e_{T_{(n)}}(x_{1}\cdots x_i e_{12}x_{i+1}\cdots x_{n}),$
		respectively. Then it follows that the polynomials $e_{{(n)}}$, $e_{{(n)}}^{e_{22},i}$, $e_{{(n)}}^{e_{12},i}$ are also linearly independent modulo $ \Id^{\varepsilon}(UT_{2})$ and as a consequence $m_{(n)}\geq 2n+3$, as desired.
	\end{proof}
	
	\begin{lemma}
		\label{lem: lower multiplicities 2 rows}
		If $p\geq 1$ and $q\geq 0$, then $m_{(p+q,p)} \geq 3(q+1)$ in \eqref{eq: generalized cocharacter UT_2}.
	\end{lemma}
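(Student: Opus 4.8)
My plan is to follow the pattern of Lemma \ref{lem: lower multiplicities row} as closely as possible. Fix $n=2p+q$, choose a convenient standard tableau $T_{(p+q,p)}$, and exhibit $3(q+1)$ generalized polynomials that are highest weight vectors relative to $T_{(p+q,p)}$, each obtained from the quasi-idempotent $e_{T_{(p+q,p)}}$ by identifying all the first-row variables to a single variable $x$ (occurring $p+q$ times) and all the second-row variables to a single variable $y$ (occurring $p$ times), exactly as the one-row vectors were obtained by identifying everything to $x$. If I can show that these $3(q+1)$ two-variable polynomials are linearly independent modulo $\GId(\UT)$, then their complete linearizations $e_{T_{(p+q,p)}}g$ are linearly independent highest weight vectors in $GP_n$ modulo $\GId(\UT)$, and since each nonzero $e_{T_{(p+q,p)}}g$ generates an irreducible $S_n$-submodule affording $\chi_{(p+q,p)}$, this yields $m_{(p+q,p)}\ge 3(q+1)$, precisely as the one-row lemma concluded $m_{(n)}\ge 2n+3$.

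The arrangement I would use puts the alternation on the $p$ columns of height two, while the $q$ exposed boxes of the first row carry a symmetric string of copies of $x$ that can be distributed as $k$ copies on one side and $q-k$ copies on the other, $0\le k\le q$, giving $q+1$ positions. Three families arise, one for each basis element of $W=\UT$ in the distinguished bridging position: a \emph{plain} family, an \emph{$e_{22}$}-family built by inserting a factor $[\,\cdot\,,e_{22}]$ as in \eqref{eq: hwv a22}, and an \emph{$e_{12}$}-family obtained by inserting the coefficient $e_{12}$ as in \eqref{eq: hwv a12}; each family contributes $q+1$ vectors, for a total of $3(q+1)$. The decisive structural point is that on $\UT$ a product of two commutators is an identity (Lemma \ref{conseguenze}), so the naive alternation of the $p$ column pairs collapses to a single effective alternation; the inserted $e_{12}$ serves as a rank-one bridge between the first- and second-row evaluations (left multiplications reading off the $(1,1)$-entries and right multiplications the $(2,2)$-entries), and the inserted $[\,\cdot\,,e_{22}]$ plays the analogous reviving role, so that the column antisymmetrizer produces a nonzero value rather than vanishing.

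The non-triviality and the linear independence are then a matter of evaluations, as in the one-row case. I would first separate the three families by the idempotent/radical component of the output: evaluating the first-row variable at $\delta e_{11}+e_{22}$ (or $\delta e_{11}+e_{22}+e_{12}$ for the $e_{22}$-family, mirroring the proof of Lemma \ref{lem: lower multiplicities row}) and the second-row variable at a suitable orthogonal element, the $e_{12}$-family lands in $Fe_{12}$ while the remaining families contribute to the $Fe_{11}$ and $Fe_{22}$ components, so a vanishing linear combination splits into three independent relations. Within each family I would collect the coefficient of $e_{12}$ and observe that, because the $q$ exposed copies of $x$ are split in $q+1$ distinct ways, the dependence on the scaling parameter $\delta$ is polynomial of degrees $0,1,\dots,q$; choosing $q+1$ distinct nonzero values of $\delta$ gives a homogeneous system whose matrix is a Vandermonde matrix, forcing all the corresponding scalars to vanish.

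The main obstacle I anticipate is the construction itself, not the Vandermonde bookkeeping. Unlike the one-row situation, where $x^n$ is patently nonzero on $\UT$, here the plain two-row alternation is killed by the identity $[x_1,x_2][x_3,x_4]\equiv 0$, so I must check that a single insertion of $e_{12}$ (respectively of the factor $[\,\cdot\,,e_{22}]$) genuinely revives a surviving alternation and that the resulting element is still a nonzero multiple of $e_{T_{(p+q,p)}}g$ rather than an accidental degeneration. Concretely, the delicate step is to produce, for each family and each $0\le k\le q$, an explicit evaluation on $\UT$ for which the column antisymmetrizer evaluates to a nonzero Vandermonde-type determinant in the diagonal entries: this single computation simultaneously certifies that the vector is a genuine highest weight vector for $(p+q,p)$ and supplies the input for the linear-independence argument. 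Once these evaluations are pinned down, everything else reduces to the routine separation described above.
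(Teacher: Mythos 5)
Your skeleton does match the paper's proof: for each split $0\le i\le q$ of the $q$ exposed first-row boxes the paper takes a standard tableau $T^{(i)}_{(p+q,p)}$ and three row-identified polynomials $b^{(i)}_{p,q}=x^{i}\bar{x}\cdots\tilde{x}\,[x,y]\,\bar{y}\cdots\tilde{y}\,x^{q-i}$, $c^{(i)}_{p,q}$ (same shape with bridge $xe_{12}y-ye_{12}x$) and $d^{(i)}_{p,q}$ (bridge $xe_{22}y-ye_{22}x$; note the paper alternates the bridge in $x,y$ rather than inserting a one-sided factor $[\,\cdot\,,e_{22}]$ as you suggest), and proves the $3(q+1)$ of them linearly independent modulo $\GId(\UT)$ by Vandermonde arguments. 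However, the one mechanism you actually commit to for the independence is false. You propose to split a vanishing linear combination into three independent relations by reading off matrix components, claiming the $e_{12}$-family lands in $Fe_{12}$ while the other two families contribute to the $Fe_{11}$ and $Fe_{22}$ components. This cannot work: since $\UT$ modulo its radical is commutative, all three bridges $[x,y]$, $xe_{12}y-ye_{12}x$ and $xe_{22}y-ye_{22}x$ are strictly upper triangular under \emph{every} substitution, and $Fe_{12}$ is an ideal, so every one of the $3(q+1)$ polynomials evaluates into $Fe_{12}$. There are no diagonal components to compare, and no splitting occurs.

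The separation has to be done sequentially, by evaluations that annihilate the competing bridges, which is exactly what the paper does: with $x=\delta e_{11}+e_{22}$, $y=e_{11}$ both $[x,y]$ and $xe_{22}y-ye_{22}x$ vanish identically while $xe_{12}y-ye_{12}x=-e_{12}$, which isolates the $e_{12}$-family and forces its coefficients to vanish via Vandermonde; then $x=\delta e_{11}+e_{12}+e_{22}$, $y=e_{11}$ still kills the $e_{22}$-bridge and (the $e_{12}$-family coefficients being already zero) isolates the commutator family; finally $x=\delta e_{11}+e_{12}+e_{22}$, $y=e_{22}$ handles the $e_{22}$-family. Moreover, the nonvanishing you flag as the ``delicate step'' is not certified by a ``Vandermonde-type determinant in the diagonal entries'': under each of these evaluations the bridge outputs a multiple of $e_{12}$, which annihilates the evaluated $y$ on one side ($e_{12}e_{11}=0$, resp.\ $e_{22}e_{12}=0$), so exactly one term of each alternation survives and the polynomial evaluates to $\pm\delta^{i}e_{12}$ up to a common nonzero power of $\delta$; the Vandermonde system then arises across the index $i$, exactly as in Lemma \ref{lem: lower multiplicities row}. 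Since you explicitly defer these evaluations, and the separation step you do specify would fail, the substance of the proof is missing from your proposal.
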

	\begin{proof}
		For any $0\leq i \leq q$, let $T_{(p+q,p)}^{(i)}$ be the standard tableau
		\begin{small}
			\begin{equation*}
				\begin{array}{|c|c|c|c|c|c|c|c|c|c|c|}\hline
					i+1 & i+2 & \cdots & i+p-1 & i+p & 1 & \cdots & i & i+2p+1 & \cdots & n \\ \hline
					i+p+2 & i+p+3 & \cdots & i+2p & i+p+1\\ \cline{1-5}
				\end{array}\:,
			\end{equation*}
		\end{small}
		and let associate to it the following generalized polynomials
		\begin{align*}
			&b^{(i)}_{p,q}(x,y)=x^{i}\underbrace{\bar{x}\cdots\tilde{x}}_{p-1}[x,y]\underbrace{\bar{y}\cdots\tilde{y}}_{p-1}x^{q-i},\\
			&c^{(i)}_{p,q}(x,y)=x^{i}\underbrace{\bar{x}\cdots\tilde{x}}_{p-1}(xe_{12}y-ye_{12}x)\underbrace{\bar{y}\cdots\tilde{y}}_{p-1}x^{q-i},\\
			&d^{(i)}_{p,q}(x,y)=x^{i}\underbrace{\bar{x}\cdots\tilde{x}}_{p-1}(xe_{22}y-ye_{22}x)\underbrace{\bar{y}\cdots\tilde{y}}_{p-1}x^{q-i},
		\end{align*}
		where the symbol $\bar{}$ or $\tilde{}$ means alternation on the corresponding variables. For any $1\leq i \leq q$, these polynomials are obtained from the quasi-idempotents corresponding to the tableau $T_{(p+q,p)}^{(i)}$ by identifying all the elements in each row. Also, they are not generalized identities of $\UT$. 
		
		Next, we shall show that the generalized polynomials $b^{(i)}_{p,q}(x,y)$, $c^{(i)}_{p,q}(x,y)$ $d^{(i)}_{p,q}(x,y)$, $0\leq i \leq q$, are linear independent modulo $\GId(\UT)$. To this end, let us consider
		$$
		f= \sum_{i=0}^{q} \alpha_i b^{(i)}_{p,q}(x,y) + \sum_{i=0}^{q} \beta_i c^{(i)}_{p,q}(x,y) + \sum_{i=0}^{q} \gamma_i d^{(i)}_{p,q}(x,y)\in \GId(\UT).
		$$
		First, suppose that there exists $\beta_i\neq 0$ for some $0\leq i \leq q$. By evaluating $x=\delta e_{11}+e_{22}$, with $\delta\in F$, $\delta\neq 0$, and $y=e_{11}$, we get $\sum_{i=0}^{q}(-1)^{p-1}\delta^{i} \beta_i=0.$
		Since $F$ is infinite, we may take $\delta_{1},\dots,\delta_{q+1}\in F$, where $\delta_{j}\neq 0$, $\delta_{j}\neq\delta_{k}$, for all $1\leq j\neq k\leq q+1$. Thus, as in the proof of the previous lemma, we obtain the following homogeneous linear system of $q+1$ equations in the $q+1$ variables $\beta_0, \dots, \beta_{q}$
		\begin{equation}
			\label{SistemaTabella2riga}
			\sum_{i=0}^{q}\delta_{j}^{i}\beta_{i}=0,\quad 1\leq j \leq q+1.
		\end{equation}
		Since the matrix associated to the system above is a Vandermonde matrix, it follows that $\beta_i=0$, for all $0\leq i \leq q$. 
		
		Now if there exists $\alpha_i\neq 0$ for some $0\leq i \leq q$, then we substitute  $x=\delta e_{11}+ e_{12}+e_{22}$, with $\delta\in F$, $\delta\neq 0$, and $y=e_{11}$, and we get $\sum_{i=0}^{q}(-1)^{p-1}\delta^{i} \alpha_i=0.$ Thus, as above, since $F$ is infinite, we obtain a homogeneous linear system of $q+1$ equations in the $q+1$ variables $\alpha_0,\dots, \alpha_q$ equivalent to \eqref{SistemaTabella2riga}.  Therefore $\alpha_{i}=0$ for all $0\leq i \leq q$, a contradiction.
		
		Finally, assume that there exists $\gamma_i\neq 0$ for some $0\leq i \leq q$. By making the evaluation  $x=\delta e_{11}+ e_{12}+e_{22}$, with $\delta\in F$, $\delta\neq 0$, and $y=e_{22}$, we obtain $\sum_{i=0}^{q}\delta^{i} \gamma_i=0.$ Then, as above, we get a homogeneous linear system of $q+1$ equations in the $q+1$ variables $\gamma_0, \dots, \gamma_q$ equivalent to \eqref{SistemaTabella2riga}.  So, $\gamma_i=0$ for all $0\leq i \leq q$, a contradiction.
		
		Thus, the $3(q+1)$ generalized polynomials $b^{(i)}_{p,q}(x,y)$, $c^{(i)}_{p,q}(x,y)$ $d^{(i)}_{p,q}(x,y)$, $0\leq i \leq q$, are linearly independent modulo $\GId(\UT)$ and, so, as in Lemma \ref{lem: lower multiplicities row}, $m_{(p+q,p)}\geq 3(q+1)$, as required.
	\end{proof}

	\begin{lemma}
		\label{lem: lower multiplicities 3 rows}
		If $p\geq 1$ and $q\geq 0$, then $m_{(p+q,p,1)} \geq q+1$ in \eqref{eq: generalized cocharacter UT_2}.
	\end{lemma}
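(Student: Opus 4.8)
The plan is to repeat, one row higher, the argument used in Lemmas \ref{lem: lower multiplicities row} and \ref{lem: lower multiplicities 2 rows}. For each $0\le i\le q$ I would fix a standard tableau of shape $(p+q,p,1)$ and attach to it a generalized highest weight vector in three variables $x,y,z$, where $x$, $y$ and $z$ correspond to the first, second and third row of $(p+q,p,1)$ and hence occur $p+q$, $p$ and $1$ times, respectively. Concretely, I would set
\begin{equation*}
	e^{(i)}_{p,q}(x,y,z)=x^{i}\underbrace{\bar{x}\cdots\tilde{x}}_{p-1}\,S(x,y,z)\,\underbrace{\bar{y}\cdots\tilde{y}}_{p-1}\,x^{q-i},\qquad 0\le i\le q,
\end{equation*}
where the barred and tilded letters encode the alternations carried by the $p-1$ columns of height two, exactly as in the two-row case, while $S(x,y,z)$ is the alternation produced by the unique column of height three, that is, the three-variable analogue of the commutator $[x,y]$ of Lemma \ref{lem: lower multiplicities 2 rows}. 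These $q+1$ polynomials are obtained from the quasi-idempotents of the tableaux by identifying all the elements in each row; I then have to check that they do not vanish on $\UT$ and that they are linearly independent modulo $\GId(\UT)$, after which passing to their complete multilinearizations gives $m_{(p+q,p,1)}\ge q+1$, precisely as in the previous lemmas.

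The crucial point, and the one I expect to be the main obstacle, is to produce an $S(x,y,z)$ that survives on $\UT$. The enabling observation is that $\UT$ does not satisfy the standard identity of degree three, since $s_{3}(e_{11},e_{12},e_{22})=e_{12}\neq 0$; thus the height-three column does not automatically die, unlike what happens for the purely nilpotent interactions inside $\UT$. The delicate part arises for $p\ge 2$, when this column is adjacent to $p-1$ columns of height two: a priori the central block $\underbrace{\bar{x}\cdots\tilde{x}}_{p-1}S(x,y,z)\underbrace{\bar{y}\cdots\tilde{y}}_{p-1}$ is a product of several alternating factors, each of which evaluates into the one-dimensional radical $Fe_{12}$, where products vanish. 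I would therefore have to choose the tableau---and, if needed, insert the coefficient $e_{22}$ inside $S$ in the spirit of the family $d^{(i)}_{p,q}$ of Lemma \ref{lem: lower multiplicities 2 rows}---so that on a suitable evaluation the block produces exactly one factor $e_{12}$, the remaining factors reducing to idempotents. Verifying both this non-collapsing and that $S$ genuinely carries the symmetry of the three-row shape is the heart of the argument.

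Granting a suitable $S$, the linear independence is routine and mirrors the two previous lemmas. Supposing $f=\sum_{i=0}^{q}\alpha_{i}e^{(i)}_{p,q}\in\GId(\UT)$, I would evaluate $x=\delta e_{11}+e_{22}+e_{12}$, with $\delta\in F$, $\delta\neq 0$, together with $y=e_{11}$ and $z=e_{22}$, chosen so that the central block, which is the same for every $i$, evaluates to $b(\delta)\,e_{12}$ with $b(\delta)\neq 0$. Since $x\,e_{12}=\delta e_{12}$ and $e_{12}\,x=e_{12}$, the left factor $x^{i}$ contributes $\delta^{i}$ and $x^{q-i}$ contributes $1$, so the evaluation yields $b(\delta)\bigl(\sum_{i=0}^{q}\delta^{i}\alpha_{i}\bigr)e_{12}=0$, that is $\sum_{i=0}^{q}\delta^{i}\alpha_{i}=0$. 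Choosing $q+1$ distinct nonzero values $\delta_{1},\dots,\delta_{q+1}$ gives a homogeneous linear system with a Vandermonde matrix, forcing $\alpha_{0}=\cdots=\alpha_{q}=0$. Hence $e^{(0)}_{p,q},\dots,e^{(q)}_{p,q}$ are linearly independent modulo $\GId(\UT)$, their complete linearizations are linearly independent highest weight vectors of shape $(p+q,p,1)$ in $GP_{n}$, and therefore $m_{(p+q,p,1)}\ge q+1$, as desired.
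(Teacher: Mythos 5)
Your proposal is correct and follows essentially the same route as the paper: the paper's polynomials $h^{(i)}_{p,q}(x,y,z)=x^{i}\underbrace{\hat{x}\cdots\tilde{x}}_{p-1}\bar{x}\bar{y}\bar{z}\underbrace{\hat{y}\cdots\tilde{y}}_{p-1}x^{q-i}$ are exactly your $e^{(i)}_{p,q}$ with $S(x,y,z)$ taken to be the plain three-variable alternation coming from the height-three column (no insertion of $e_{22}$ is needed), and the paper uses precisely your evaluation $x=\beta e_{11}+e_{12}+e_{22}$, $y=e_{11}$, $z=e_{22}$ followed by the same Vandermonde argument and linearization step. The non-collapsing you flag as the main obstacle does hold for this choice (for instance, when $p=2$ the central block $\hat{x}\,\bar{x}\bar{y}\bar{z}\,\hat{y}$ evaluates to $\pm e_{12}$), so your argument coincides with the paper's.
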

	\begin{proof}
		For any $0\leq i \leq q$, define $T_{(p+q,p,1)}^{(i)}$ to be the standard tableau
		\begin{small}
			\begin{equation*}
				\begin{array}{|c|c|c|c|c|c|c|c|c|c|}\hline
					i+p & i+1 & \cdots &  i+p & 1 & \cdots & i & i+2p+2 & \cdots & n \\ \hline
					i+p+1 & i+p+3 & \cdots & i+2p+1 \\ \cline{1-4}
					i+p+2\\
					\cline{1-1}
				\end{array}\:,
			\end{equation*}
		\end{small}
		and associate to it the generalized polynomial
		\begin{equation*}
			h^{(i)}_{p,q}(x,y,z)=x^{i}\underbrace{\hat{x}\cdots\tilde{x}}_{p-1}\bar{x} \bar{y}\bar{z}\underbrace{\hat{y}\cdots\tilde{y}}_{p-1}x^{q-i}, 
		\end{equation*}
		where the symbol $\hat{}$ or $\tilde{}$ or $\bar{}$ means alternation on the corresponding variables.  For any $1\leq i \leq q$ these generalized polynomials are obtained from the quasi-idempotents corresponding to the tableau $T_{(p+q,p,1)}^{(i)}$ by identifying all the elements in each row. Clearly, $h^{(i)}_{p,q}(x,y,z)$, $1\leq i \leq q$,  do not belong to $\GId(\UT)$. We claim that the $q+1$ generalized polynomials $h^{(i)}_{p,q}(x,y,z)$, $0\leq i \leq q$, are linear independent modulo $\GId(\UT)$. If not, there exist $\alpha_0, \dots, \alpha_q\in F$ not all zero such that
		$$
		\sum_{i=0}^{q} \alpha_i h^{(i)}_{p,q}(x,y,z) \in \GId(\UT).
		$$
		If we substitute $x=\beta e_{11}+ e_{12}+e_{22}$, with $\beta\in F$, $\beta\neq 0$, $y=e_{11}$, and $z=e_{22}$, then we obtain $\sum_{i=0}^{q} \beta^i \alpha_i=0$, and again with a Vandermonde argument we get that  $\alpha_i=0$ for all $0\leq i \leq q$, a contradiction.
		
		Therefore the $q+1$ generalized polynomials $h^{(i)}_{p,q}(x,y,z)$, $0\leq i \leq q$, are linearly independent modulo $\GId(\UT)$, as claimed. Again, as in Lemma \ref{lem: lower multiplicities row}, this implies that $m_{(p+q,p,1)} \geq q+1$.
	\end{proof}

	Next, we shall prove the main theorem of the section.

	\begin{theorem}
		\label{Thm: gcocharacter UT_2}
		If $g\chi_n(\UT)=\sum_{\lambda\vdash n} m_{\lambda} \chi_{\lambda}$ is the $n$th generalized cocharacter of $\UT$, then
		\begin{equation*}
			m_{\lambda} =\begin{cases}
				2n+3, & \mbox{ if } \lambda=(n) \\
				3(q+1), & \mbox{ if } \lambda=(p+q,p) \\
				q+1, & \mbox{ if } \lambda=(p+q,p,1) \\
				0, & \mbox{ in all other cases}
			\end{cases}.
		\end{equation*}
	\end{theorem}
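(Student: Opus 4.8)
The plan is to promote the three lower bounds of Lemmas \ref{lem: lower multiplicities row}, \ref{lem: lower multiplicities 2 rows} and \ref{lem: lower multiplicities 3 rows} to equalities, and to annihilate every remaining multiplicity, by matching them against the exact value $gc_n(\UT) = (n+2)2^{n-1}+2$ computed in Theorem \ref{basedelTideale}. Since $\chr F = 0$, the generalized codimension equals the sum of the multiplicities weighted by the degrees of the irreducible characters,
$$
gc_n(\UT) = \sum_{\lambda \vdash n} m_\lambda \deg \chi_\lambda ,
$$
so, retaining on the right only the three families of shapes appearing in the lemmas, we obtain the lower bound
$$
gc_n(\UT) \;\geq\; B_n := (2n+3) + \sum_{\substack{p\geq 1,\ q\geq 0 \\ 2p+q=n}} 3(q+1)\,\deg\chi_{(p+q,p)} + \sum_{\substack{p\geq 1,\ q\geq 0 \\ 2p+q+1=n}} (q+1)\,\deg\chi_{(p+q,p,1)} .
$$
The entire theorem then reduces to the single numerical identity $B_n = (n+2)2^{n-1}+2$: once this is verified, the chain $gc_n(\UT) \geq B_n = gc_n(\UT)$ collapses to equalities, forcing each lemma's bound to be attained and $m_\lambda = 0$ for every other $\lambda$ (in particular for all partitions with at least four rows and for all three-row shapes whose third part exceeds $1$).

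To evaluate $B_n$ I would substitute the classical dimension formulas. For a two-row shape one has the ballot-number expression $\deg\chi_{(p+q,p)} = \binom{n}{p} - \binom{n}{p-1}$, while for the three-row shape $(p+q,p,1)$ the hook-length formula yields a closed expression of the form $\frac{(q+1)(p+q+1)p}{(p+1)(p+q+2)}\binom{n}{p}$ (with $n=2p+q+1$). Re-indexing both sums by the second-row length $b=p$, the two-row contribution becomes $3\sum_{b\geq 1}(n-2b+1)\bigl(\binom{n}{b}-\binom{n}{b-1}\bigr)$ and the three-row contribution a companion binomial sum over the complementary range; a summation-by-parts rearrangement of the telescoping differences, together with the elementary evaluations $\sum_b\binom{n}{b}=2^n$ and $\sum_b b\binom{n}{b}=n2^{n-1}$, reduces the whole expression to the target $(n+2)2^{n-1}+2$.

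The main obstacle is precisely this binomial bookkeeping. One has to evaluate the three-row dimension correctly, keep the two ranges of summation (governed by $2p+q=n$ versus $2p+q+1=n$) properly separated, and carry the weighted alternating sums through the summation-by-parts cancellations so that the surviving boundary terms recombine exactly into $(n+2)2^{n-1}+2$, with no stray lower-order discrepancy; a useful sanity check is to confirm the identity directly for small $n$ (for instance $n=2,3,4,5$ give $B_n=10,22,50,114$). Once $B_n=(n+2)2^{n-1}+2$ is established, the forced equality in $gc_n(\UT)\geq B_n$ simultaneously pins the three multiplicities to their lower bounds and kills all remaining ones, which is exactly the assertion of the theorem.
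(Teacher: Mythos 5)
Your proposal follows essentially the same route as the paper's own proof: both take the lower bounds from Lemmas \ref{lem: lower multiplicities row}, \ref{lem: lower multiplicities 2 rows} and \ref{lem: lower multiplicities 3 rows}, weight them by the hook-formula degrees, and force equality by matching the resulting sum against $gc_n(\UT)=(n+2)2^{n-1}+2$ from Theorem \ref{basedelTideale}. Your dimension formulas and sanity-check values are correct (the paper carries out the same binomial bookkeeping, splitting into the cases $n=2k$ and $n=2k+1$), so the argument is sound.
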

	\begin{proof}
		Let $d_\lambda=\deg \chi_\lambda$  be the degree of $\chi_\lambda$, $\lambda\vdash n$. Then $gc_n(\UT)= \sum_{\lambda\vdash n} m_\lambda d_\lambda$, and by Lemmas \ref{lem: lower multiplicities row}, \ref{lem: lower multiplicities 2 rows} and \ref{lem: lower multiplicities 3 rows} we have that
		\begin{equation} \label{eq: diseq deg cochar}
			gc_n(\UT)\geq  (2n+3) d_{(n)} + \sum_{ \substack{ 1 \leq p \leq \lfloor\frac{n}{2}\rfloor \\ 0 \leq q \leq n-2p}} 3 (q+1) d_{(p+q,p)} + \sum_{ \substack{ 1 \leq p \leq \lfloor\frac{n-1}{2}\rfloor \\ 0 \leq q \leq n-2p-1}}  (q+1) d_{(p+q,p,1)}.
		\end{equation}
		Thus, to complete the proof is enough to show the \eqref{eq: diseq deg cochar} is actually an equality. To this end, notice that for $n=2p+q$, by the hook formula (see for example \cite[Theorem 2.3.21]{JamesKerber1981}) we have that
		$$
		d_{(p+q,p)}= \dfrac{n!}{p!q!(p+q+1)\cdots (q+2)}=\binom{n}{p} \dfrac{n-2p+1}{n-p+1}.
		$$
		Then, it follows that
		\begin{align*}
			\sum_{ \substack{ 1 \leq p \leq \lfloor\frac{n}{2}\rfloor \\ 0 \leq q \leq n-2p}}  (q+1) d_{(p+q,p)} &=(n+1)\sum_{p=1}^{\lfloor\frac{n}{2}\rfloor}\binom{n}{p}-3\sum_{p=1}^{\lfloor\frac{n}{2}\rfloor}\binom{n}{p}p+\sum_{p=1}^{\lfloor\frac{n}{2}\rfloor}\binom{n}{p}\dfrac{p^{2}}{n-p+1}\\
			&=(n+1) \left( \sum_{p=1}^{\lfloor\frac{n}{2}\rfloor}\binom{n}{p}+\sum_{p=n-\lfloor\frac{n}{2}\rfloor+1}^{n}\binom{n}{p} \right) - \sum_{p=1}^{\lfloor\frac{n}{2}\rfloor}\binom{n}{p}p -\sum_{p=n-\lfloor\frac{n}{2}\rfloor+1}^{n}\binom{n}{p}p -2\sum_{p=1}^{\lfloor\frac{n}{2}\rfloor}\binom{n}{p}p  ,
		\end{align*}
		where in the last equality we use that $\binom{n}{n-p+1}=\binom{n}{p}\frac{p}{n-p+1}$. Recall that $i\binom{i-1}{j-1}=j\binom{i}{j}$ and $\sum_{j=0}^{i}\binom{i}{j}=2^{i}$. Hence, if $n=2k$,
		\begin{align*}
			\sum_{ \substack{ 1 \leq p \leq k \\ 0 \leq q \leq n-2p}}  (q+1) d_{(p+q,p)}&=(2k+1)(2^{2k}-1)-k2^{2k}-4k\sum_{p=1}^{k} \binom{2k-1}{p-1}=2^{2k}-2k-1.
		\end{align*}
		In case $n=2k+1$,
		\begin{align*}
			\sum_{ \substack{ 1 \leq p \leq k \\ 0 \leq q \leq n-2p}}  (q+1) d_{(p+q,p)}=&(2k+2) \left(2^{2k+1}-1-\binom{2k+1}{k+1} \right)-(2k+1)2^{2k}+(k+1)\binom{2k+1}{k+1}-2(2k+1)\sum_{p=1}^{k} \binom{2k}{p-1}\\
			=& 2^{2k+1}-2k-2.
		\end{align*}
		Thus, we have that
		\begin{equation}\label{eq: comp car 2 row}
			\sum_{ \substack{ 1 \leq p \leq \lfloor\frac{n}{2}\rfloor \\ 0 \leq q \leq n-2p}}  (q+1) d_{(p+q,p)}=2^{n}-n-1.
		\end{equation}
		Now, for  $n=2p+q+1$, by applying the hook formula again, we get that
		$$
		d_{(p+q,p,1)}= \dfrac{n!}{(p-1)!q!(p+1)(p+q+2)(p+q)\cdots (q+2)}=\binom{n}{p+1} \dfrac{p(n-2p)}{n-p+1}.
		$$
		Then, by recalling that $\binom{n}{p+1}=\binom{n}{p}\frac{n-p}{p+1}$, $\binom{n}{n-p+1}= \binom{n}{p}\frac{n}{n-p+1}$ and $\binom{n}{p+1}+\binom{n}{p}=\binom{n+1}{p+1}$,  it follows
		\begin{align*}
			\sum_{ \substack{ 1 \leq p \leq \lfloor\frac{n-1}{2}\rfloor \\ 0 \leq q \leq n-2p-1}}  (q+1) d_{(p+q,p,1)}= \sum_{p=1}^{ \lfloor\frac{n-1}{2}\rfloor} \left(  \binom{n+1}{p+1}p - \binom{n}{p-1}\right) (n-2p) .
		\end{align*}
		Hence, with a similar computation as above, we obtain that
		\begin{align}\label{eq: comp car 3 row}
			\sum_{ \substack{ 1 \leq p \leq \lfloor\frac{n-1}{2}\rfloor \\ 0 \leq q \leq n-2p-1}}  (q+1) d_{(p+q,p,1)} =  (n-4)2^{n-1}+n+2.
		\end{align}
		Thus, by \eqref{eq: comp car 2 row}, \eqref{eq: comp car 3 row} and since $d_{(n)}=1$, it follows that 
		\begin{align*}
			&(2n+3) d_{(n)} +3 \sum_{ \substack{ 1 \leq p \leq \lfloor\frac{n}{2}\rfloor \\ 0 \leq q \leq n-2p}}  (q+1) d_{(p+q,p)} + \sum_{ \substack{ 1 \leq p \leq \lfloor\frac{n-1}{2}\rfloor \\ 0 \leq q \leq n-2p-1}}  (q+1) d_{(p+q,p,1)}\\
			& \ = 2n+3 + 3(2^{n}-n-1) + (n-4)2^{n-1}+n+2   =(n+2)2^{n-1} +2.
		\end{align*}
		Since by Theorem \ref{basedelTideale} $gc_n(\UT)=(n+2)2^{n-1} +2$, we get that \ref{eq: diseq deg cochar} is actually an equality and we are done.
	\end{proof}

	\section{On almost polynomial growth}\label{sezionecinque}

	In this section, we shall construct a variety of $UT_2$-algebras inside $\gv(UT_2)$ of almost polynomial growth of the codimensions.
	We shall also present another variety of $UT_2$-algebras that have almost polynomial growth of the codimensions but it is not contained in $\gv(UT_2)$.

	Let us define on $UT_2$ a new structure as $UT_2$-bimodule in the following way: let $1:=e_{11}+e_{22}$ and $e_{22}$ act by left and right multiplication as in the previous case and let 
	$$
	e_{12}\cdot a = a\cdot e_{12}=0
	$$ 
	for all $a\in UT_2.$ It readily follows that this action defines $UT_2$ as a new $UT_2$-algebras that we will denote it by $UT_2^D.$ Such a notation is justified by noticing that if we let $D$ be the subalgebra of $UT_2$ spanned by $e_{11}$ and $e_{22},$ then the above action is the natural generalization of the left and right multiplication of $UT_2$ by elements of $D$, i.e., we can also view $UT_2$ as a $D$-algebra.
	
	Following step-by-step the lines of Theorem \ref{basedelTideale} and Theorem \ref{Thm: gcocharacter UT_2} with the necessary changes, we can prove the following results.
	
	\begin{theorem}\label{base del Tideale D algebra}
		Let $UT_2^D$ be the $UT_2$-algebra with the above action. Then $\GId(UT_2^D)$ is generated, as $T_{UT_2}$-ideal,  by the following polynomials:
		$$
		e_{12}x; \quad xe_{12}; \quad [x_1,x_2]- [x_1,x_2,e_{22}].
		$$
		Moreover, $gc_n(UT_2^D) = n2^{n-1}+2.$
	\end{theorem}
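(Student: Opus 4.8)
The plan is to mirror the proof of Theorem \ref{basedelTideale} almost verbatim, exploiting that passing from $UT_2$ to $UT_2^D$ changes only the action of $e_{12}$, which now annihilates everything on both sides. Let $J$ denote the $T_{UT_2}$-ideal generated by $e_{12}x$, $xe_{12}$ and $[x_1,x_2]-[x_1,x_2,e_{22}]$. First I would verify $J\subseteq\GId(UT_2^D)$: the polynomials $e_{12}x$ and $xe_{12}$ are generalized identities of $UT_2^D$ directly from $e_{12}\cdot a=a\cdot e_{12}=0$, while $[x_1,x_2]-[x_1,x_2,e_{22}]$ involves only the coefficients $1$ and $e_{22}$, whose left and right actions on $UT_2^D$ coincide with ordinary multiplication; hence it vanishes on $UT_2^D$ for exactly the same reason \eqref{Id UT_2} vanishes on $UT_2$.

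For the reverse inclusion I would repeat the reduction of an arbitrary multilinear monomial modulo $J$. All the consequences collected in Lemma \ref{conseguenze} are deductions from $[x_1,x_2]-[x_1,x_2,e_{22}]$ alone, so they remain valid modulo $J$, and the reduction of monomials free of $e_{12}$ produces the same families $x_1\cdots x_n$, $e_{22}x_1\cdots x_n$, $X^{(\mathcal{L},k)}$ and $X_{22}^{(\mathcal{P})}$ as in \eqref{base non identita}. The only genuine change occurs for monomials containing an $e_{12}$: since every such $e_{12}$ sits adjacent to a variable and $e_{12}x,\,xe_{12}\in J$, each of them is now congruent to $0$ modulo $J$. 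Thus the entire family $X_{12}^{(\mathcal{I})}$ collapses, and $GP_n$ is spanned modulo $J$ by $x_1\cdots x_n$, $e_{22}x_1\cdots x_n$, the $X^{(\mathcal{L},k)}$ and the $X_{22}^{(\mathcal{P})}$.

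To prove these are independent modulo $\GId(UT_2^D)$ I would take a vanishing linear combination and run the same evaluations as in Theorem \ref{basedelTideale}, simply discarding those that isolated the now-absent coefficients $\beta_{\mathcal{I}}$. This is legitimate because the surviving polynomials carry only the coefficients $1$ and $e_{22}$, so their $UT_2^D$-evaluations agree with the ordinary $UT_2$-evaluations, and because the variables $x_i$ still range over all of $UT_2$, keeping the substitution $x_i=e_{12}$ available. Concretely, $x_1=\cdots=x_n=e_{11}$ forces $\alpha_1=0$, $x_1=\cdots=x_n=e_{22}$ forces $\alpha_2=0$, and placing $e_{12}$ in the leading slot of the relevant commutator while setting the prefix variables to $e_{11}+e_{22}$ and the remaining commutator entries to $e_{22}$ forces each $\gamma_{\mathcal{L},k}$ and each $\delta_{\mathcal{P}}$ to vanish, exactly as before (the ordering conditions $k>m_1$ versus $q_1<q_2$ again guarantee that distinct families do not interfere). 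Hence $\GId(UT_2^D)=J$, and the displayed families form a basis of $GP_n$ modulo $GP_n\cap\GId(UT_2^D)$.

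The codimension then follows by subtraction: this basis coincides with that of Theorem \ref{basedelTideale} with the family $X_{12}^{(\mathcal{I})}$ removed, and that family contributed exactly $\sum_{r=0}^{n}\binom{n}{r}=2^{n}$ basis elements. Therefore
\[
gc_n(UT_2^D)=(n+2)2^{n-1}+2-2^{n}=n2^{n-1}+2.
\]
The step requiring the most care is the independence argument, where one must confirm that the evaluations still single out each coefficient once the $X_{12}^{(\mathcal{I})}$ family is absent; in fact this makes the argument easier rather than harder, since those terms only ever carried the vanished coefficients $\beta_{\mathcal{I}}$, and every remaining evaluation isolates its target coefficient unchanged.
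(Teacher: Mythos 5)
Your proposal is correct and coincides with the paper's intended argument: the paper offers no separate proof of this theorem, saying only that it follows ``step-by-step the lines of Theorem \ref{basedelTideale} \dots with the necessary changes,'' and your write-up carries out exactly those changes. Specifically, you correctly identify that the only modifications are the collapse of the family $X_{12}^{(\mathcal{I})}$ modulo the new generators $e_{12}x$, $xe_{12}$ (every monomial containing $e_{12}$ has it adjacent to a variable, hence lies in $J$), the observation that the surviving spanning polynomials involve only the coefficients $1$ and $e_{22}$ so the evaluations of Theorem \ref{basedelTideale} go through verbatim (with $x_i=e_{12}$ still an admissible substitution), and the count $(n+2)2^{n-1}+2-2^{n}=n2^{n-1}+2$.
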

	
	\begin{theorem}\label{cocarattere di UT2D}
		Let $g\chi_n(\UT^D)=\sum_{\lambda\vdash n} m_{\lambda} \chi_{\lambda}$ be the $n$th generalized cocharacter of $\UT^D.$ Then
		\begin{equation*}
			m_{\lambda} =\begin{cases}
				n+2, & \mbox{ if } \lambda=(n) \\
				2(q+1), & \mbox{ if } \lambda=(p+q,p) \\
				q+1, & \mbox{ if } \lambda=(p+q,p,1) \\
				0, & \mbox{ in all other cases}
			\end{cases}.
		\end{equation*}
	\end{theorem}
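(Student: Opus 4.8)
The plan is to transport the argument of Theorem \ref{Thm: gcocharacter UT_2} to $\UT^D$, the only structural difference being the action of the coefficient $e_{12}$. The decisive observation is that, by Theorem \ref{base del Tideale D algebra}, both $e_{12}x$ and $xe_{12}$ lie in $\GId(\UT^D)$; consequently any generalized polynomial in which $e_{12}$ occurs as a coefficient adjacent to a variable is a generalized identity of $\UT^D$ (apply the $T_{\UT}$-ideal closure to the two generators). Scanning the highest weight vectors produced in Lemmas \ref{lem: lower multiplicities row}, \ref{lem: lower multiplicities 2 rows} and \ref{lem: lower multiplicities 3 rows}, this says exactly that the vectors carrying an $e_{12}$-coefficient become trivial, namely the $n+1$ polynomials $a_{12}^{(j)}(x)$ for $\lambda=(n)$ and the $q+1$ polynomials $c^{(i)}_{p,q}(x,y)$ for $\lambda=(p+q,p)$, while all the remaining vectors, which involve only the coefficient $e_{22}$, are untouched.

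First I would re-establish the three lower bounds. For $\lambda=(n)$ I retain the $n+2$ vectors $a(x)$, $a_{22}^{(0)}(x)$ and $a_{22}^{(i)}(x)$, $1\le i\le n$, and rerun the independence argument of Lemma \ref{lem: lower multiplicities row}: the substitutions $x=e_{11}$, $x=e_{22}$ and $x=\delta e_{11}+e_{22}+e_{12}$ still isolate $\alpha$, $\beta_0$ and the $\beta_i$ via a Vandermonde system, the sole change being that the (now absent) $\gamma_j$-terms no longer occur; hence $m_{(n)}\ge n+2$. For $\lambda=(p+q,p)$ I keep $b^{(i)}_{p,q}(x,y)$ and $d^{(i)}_{p,q}(x,y)$, $0\le i\le q$, and the evaluations $x=\delta e_{11}+e_{12}+e_{22},\,y=e_{11}$ and $x=\delta e_{11}+e_{12}+e_{22},\,y=e_{22}$ of Lemma \ref{lem: lower multiplicities 2 rows} yield two independent Vandermonde systems, giving $m_{(p+q,p)}\ge 2(q+1)$. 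Finally, the vectors $h^{(i)}_{p,q}(x,y,z)$ carry no $e_{12}$-coefficient, so Lemma \ref{lem: lower multiplicities 3 rows} applies verbatim and $m_{(p+q,p,1)}\ge q+1$.

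The conclusion then comes from a dimension count identical in spirit to the one closing Theorem \ref{Thm: gcocharacter UT_2}. Writing $d_\lambda=\deg\chi_\lambda$ and using $gc_n(\UT^D)=\sum_{\lambda\vdash n} m_\lambda d_\lambda$ together with the three lower bounds, I obtain
\[
gc_n(\UT^D)\ge (n+2)d_{(n)} + \sum_{\substack{1\le p\le\lfloor n/2\rfloor \\ 0\le q\le n-2p}} 2(q+1)d_{(p+q,p)} + \sum_{\substack{1\le p\le\lfloor (n-1)/2\rfloor \\ 0\le q\le n-2p-1}} (q+1)d_{(p+q,p,1)}.
\]
The two sums are exactly those already evaluated in \eqref{eq: comp car 2 row} and \eqref{eq: comp car 3 row}, so, since $d_{(n)}=1$, the right-hand side equals $(n+2)+2(2^n-n-1)+\big((n-4)2^{n-1}+n+2\big)=n2^{n-1}+2$. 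As Theorem \ref{base del Tideale D algebra} gives $gc_n(\UT^D)=n2^{n-1}+2$, the inequality is forced to be an equality; therefore each lower bound is attained and $m_\lambda=0$ for every partition outside the three listed families.

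The step needing genuine care, beyond transcription, is confirming that the surviving highest weight vectors remain linearly independent modulo $\GId(\UT^D)$. This is \emph{not} inherited from the $\UT$-case, since $\GId(\UT)$ and $\GId(\UT^D)$ are incomparable ($e_{12}x$ separates them); instead one reruns the substitutions of the three lemmas directly in $\UT^D$. Because every surviving vector uses only the coefficient $e_{22}$, whose left and right action on $\UT$ is unchanged, each evaluation reproduces the very same Vandermonde system as before, now with the $e_{12}$-coefficient monomials simply absent, and nonsingularity of the Vandermonde matrices forces all scalars to vanish. Once this is verified, the degree count does the rest automatically, pinning down the three multiplicities and excluding all other shapes at once.
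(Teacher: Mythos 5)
Your proof is correct and is exactly the route the paper takes: the paper's own ``proof'' of this theorem is the single remark that one follows Theorems \ref{basedelTideale} and \ref{Thm: gcocharacter UT_2} step-by-step with the necessary changes, and your proposal is precisely that working-out --- discard the $e_{12}$-carrying highest weight vectors $a_{12}^{(j)}$ and $c^{(i)}_{p,q}$ (the same surviving list appears in the paper's Remark \ref{rmk: linear indipendet hwv}), re-run the Vandermonde evaluations, which involve only the coefficient $e_{22}$ whose action on $\UT^D$ is unchanged, and close with the count $gc_n(\UT^D)=n2^{n-1}+2$ from Theorem \ref{base del Tideale D algebra}, whose arithmetic you carry out correctly. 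One correction to a side remark: $\GId(\UT)$ and $\GId(\UT^D)$ are \emph{not} incomparable --- since the generator $[x_1,x_2]-[x_1,x_2,e_{22}]$ involves only the coefficient $e_{22}$, one has $\GId(\UT)\subsetneq\GId(\UT^D)$ (equivalently $\UT^D\in\gv(\UT)$, as the paper notes after this theorem), with $e_{12}x$ witnessing strictness; your re-verification of linear independence is indeed necessary, but the reason is that independence modulo the smaller ideal $\GId(\UT)$ does not pass to the larger ideal $\GId(\UT^D)$, not incomparability.
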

	
	Remark that by Theorem \ref{base del Tideale D algebra} $UT_2^D\in\gv(UT_2)$ and $\gv(UT_2^D)$ grows exponentially, then it immediately follows that
	
	\begin{corollary}
		$\gv(UT_2)$ does not have almost polynomial growth of the codimensions.
	\end{corollary}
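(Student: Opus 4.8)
The plan is to refute the definition of almost polynomial growth directly: it suffices to produce a single \emph{proper} generalized subvariety of $\gv(UT_2)$ whose generalized codimension sequence fails to be polynomially bounded. My candidate is $\gv(UT_2^D)$, and the whole argument rests on the two results just stated for $UT_2^D$.

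First I would establish the containment $\gv(UT_2^D)\subseteq\gv(UT_2)$. By Theorem \ref{base del Tideale D algebra} the generator $[x_1,x_2]-[x_1,x_2,e_{22}]$ of $\GId(UT_2)$ from Theorem \ref{basedelTideale} is among the generators of $\GId(UT_2^D)$; hence $\GId(UT_2)\subseteq\GId(UT_2^D)$, which is precisely the assertion that $UT_2^D\in\gv(UT_2)$ and therefore $\gv(UT_2^D)\subseteq\gv(UT_2)$.

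Next I would show that this subvariety is proper, i.e.\ that the inclusion of $T_{UT_2}$-ideals is \emph{strict}. The generalized identity $e_{12}x$ belongs to $\GId(UT_2^D)$ by Theorem \ref{base del Tideale D algebra}, but it is not a generalized identity of $UT_2$ under its natural action: evaluating $x=e_{22}$ yields $e_{12}e_{22}=e_{12}\neq 0$. Thus $\GId(UT_2)\subsetneq\GId(UT_2^D)$, and consequently $\gv(UT_2^D)\subsetneq\gv(UT_2)$.

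Finally, for the growth, Theorem \ref{base del Tideale D algebra} gives $gc_n(UT_2^D)=n2^{n-1}+2$, which is exponentially large and in particular not polynomially bounded. Combining the three steps, $\gv(UT_2^D)$ is a proper generalized subvariety of $\gv(UT_2)$ that does not have polynomial growth, so $\gv(UT_2)$ cannot have almost polynomial growth. I do not anticipate any genuine obstacle here, since every ingredient is already supplied by Theorems \ref{basedelTideale} and \ref{base del Tideale D algebra}; the only point requiring a moment of care is the strictness in the second step, namely exhibiting one generalized identity of $UT_2^D$ that fails for $UT_2$, and the evaluation $x=e_{22}$ settles this at once.
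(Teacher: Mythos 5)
Your proposal is correct and follows essentially the same route as the paper: the paper's proof is exactly the observation that $UT_2^D\in\gv(UT_2)$ (via Theorem \ref{base del Tideale D algebra}) together with the exponential growth $gc_n(UT_2^D)=n2^{n-1}+2$. The only difference is that you spell out the properness of the subvariety explicitly (via $e_{12}x\in\GId(UT_2^D)\setminus\GId(UT_2)$), a step the paper leaves implicit, which is a worthwhile clarification but not a different argument.
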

	
	Next, we shall prove that $\gv(UT_2^D)$ is a variety of $UT_2$-algebras of almost polynomial growth. If $\mathcal{V}$ is a variety of $\UT$-algebras, then for every $n\geq 1$, we write
	$$g\chi_{n}(\mathcal{V})=\sum_{\lambda\vdash n}m_{\lambda}^{\mathcal{V}}\chi_{\lambda},$$
	where $m_{\lambda}^{\mathcal{V}}$ denotes the multiplicity of irreducible character $\chi_{\lambda}$ in $g\chi_{n}(\mathcal{V})$.
	
	\begin{remark}\label{rmk: linear indipendet hwv}
		Recall that the $n+2$ linear independent generalized polynomials corresponding to the partition $ \lambda=(n)$ are:
		\begin{align*}
			& a(x)=x^n  ,\\
			& a_{22}^{(0)}(x)= e_{22} x^n ,\\
			& a_{22}^{(i)}(x)=x^{i-1} [x, e_{22}] x^{n-i} , \quad 1\leq i \leq n.
		\end{align*}
		The $2(q+1)$ linear independent generalized polynomials corresponding to the partition $ \lambda=(p+q,p)$ are:
		\begin{align*}
			&b^{(i)}_{p,q}(x,y)=x^{i}\underbrace{\bar{x}\cdots\tilde{x}}_{p-1}[x,y]\underbrace{\bar{y}\cdots\tilde{y}}_{p-1}x^{q-i}, \quad 0\leq i\leq q\\
			&d^{(i)}_{p,q}(x,y)=x^{i}\underbrace{\bar{x}\cdots\tilde{x}}_{p-1}(xe_{22}y-ye_{22}x)\underbrace{\bar{y}\cdots\tilde{y}}_{p-1}x^{q-i},\quad 0\leq i\leq q.
		\end{align*}
		Finally, the $q+1$ linear independent generalized polynomials corresponding to the partition $ \lambda=(p+q,p,1)$ are:
		\begin{equation*}
			h^{(i)}_{p,q}(x,y,z)=x^{i}\underbrace{\hat{x}\cdots\tilde{x}}_{p-1}\bar{x} \bar{y}\bar{z}\underbrace{\hat{y}\cdots\tilde{y}}_{p-1}x^{q-i},\quad 0\leq i\leq q. 
		\end{equation*}
		
	\end{remark}
	
	\begin{lemma}
		\label{LemPolynomialGrowth}
		Let $\mathcal{U}$ be a proper subvariety of $\gv(\UT^D)$. Then there exist constants $M<N$ such that
		$$x^{M}yx^{N-M}+\sum_{i<M}\mu_{i} x^{i}yx^{N-i} \in \GId(\mathcal{U}),$$
		for some  $\mu_{i}\in F$.
	\end{lemma}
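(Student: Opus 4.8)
The plan is to reduce the statement to producing a single nontrivial linear dependence, modulo $\GId(\mathcal{U})$, among the ``pure'' generalized monomials $u_i:=x^i y x^{N-i}$, $0\le i\le N$, and then to normalize it into the required shape. First I record that these monomials are linearly independent modulo $\GId(UT_2^D)$: evaluating $x=\alpha e_{11}+\beta e_{22}$ and $y=e_{12}$ in $UT_2^D$ sends $u_i\mapsto \alpha^i\beta^{N-i}e_{12}$, and a Vandermonde argument (exactly as in Lemma~\ref{lem: lower multiplicities row}) shows that the $N+1$ coefficient functions $\alpha^i\beta^{N-i}$ are independent over the infinite field $F$. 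Hence in $UT_2^D$ itself no such dependence exists, and the whole point of the lemma is that passing to a \emph{proper} subvariety must create one.

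Granting, for the moment, a nontrivial dependence $\sum_{i=0}^{N}c_i u_i\in\GId(\mathcal{U})$, I normalize it as follows. Put $M=\max\{i:c_i\neq0\}$ and divide by $c_M$, so that $x^M y x^{N-M}+\sum_{i<M}\mu_i x^i y x^{N-i}\in\GId(\mathcal{U})$ with $\mu_i=c_i/c_M$. If $M<N$ we are done. If instead $M=N$, I multiply this identity on the right by $x$; this is legitimate because $\GId(\mathcal{U})$ is a $T_{UT_2}$-ideal, and it yields a dependence of the same kind in degree $N'=N+1$ whose leading index is $M'=N<N'$. Thus it suffices to exhibit, for \emph{some} $N$, any nontrivial dependence among the $u_i$ modulo $\GId(\mathcal{U})$.

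To produce it, I use that $\mathcal{U}\subsetneq\gv(UT_2^D)$ forces $\GId(\mathcal{U})\supsetneq\GId(UT_2^D)$, so by Theorem~\ref{cocarattere di UT2D} some multiplicity drops, $m_\lambda^{\mathcal{U}}<m_\lambda$, for a partition $\lambda$ among $(n)$, $(p+q,p)$, $(p+q,p,1)$; equivalently, some nonzero combination $g$ of the highest weight vectors listed in Remark~\ref{rmk: linear indipendet hwv} lies in $\GId(\mathcal{U})\setminus\GId(UT_2^D)$. I then specialize $g$ to two variables with $y$ occurring linearly: setting all but one of the generating variables equal to $x$ and keeping one equal to $y$ (and, in the one row case, using the polarization $x\mapsto x+y$ and extracting the multidegree $(N,1)$ component). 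Since $\GId(\mathcal{U})$ is stable under substitutions, the resulting generalized polynomial still lies in $\GId(\mathcal{U})$ and takes the form $\sum_i c_i u_i + r$, where $r$ gathers precisely the monomials that carry an internal coefficient $e_{22}$.

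The main obstacle is to eliminate the $e_{22}$-decorated remainder $r$ while keeping the pure part nontrivial. The decisive tool is the defining relation $[x_1,x_2]\equiv[x_1,x_2,e_{22}]$ of $\GId(UT_2^D)$ together with its consequences $e_{22}[x_1,x_2]\equiv0$ and $[x_1,x_2]\equiv[x_1,x_2]e_{22}$ (the analogues of Lemma~\ref{conseguenze}, still valid for $UT_2^D$ since $[UT_2,UT_2]\subseteq Fe_{12}$). The effect is that replacing the variable adjacent to an $e_{22}$ by a commutator collapses the corresponding $e_{22}$-terms; for instance, substituting $y$ by $[x,y]$ in $xe_{22}y-ye_{22}x$ and reducing modulo $\GId(UT_2^D)$ produces the pure dependence $xyx-yx^2=u_1-u_0\equiv0$. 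Carrying out this substitution-and-reduction systematically on $r$ transfers the entire identity into the span of the $u_i$. The delicate point, where properness is genuinely used, is to guarantee that the reduction does not also annihilate the pure part, i.e. that $\sum_i c_i u_i\not\equiv0$ modulo $\GId(\mathcal{U})$; I would control this by tracking the $e_{12}$-coefficient $\sum_i c_i\alpha^i\beta^{N-i}$ obtained from the evaluation $x=\alpha e_{11}+\beta e_{22}$, $y=e_{12}$ applied to the \emph{original} non-identity $g$, choosing the specialization so that this coefficient survives as a nonzero polynomial in $\alpha,\beta$ (which is possible exactly because $g\notin\GId(UT_2^D)$) and checking that the reduction of $r$ leaves it unchanged. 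With the nontrivial pure dependence in hand, the normalization of the second paragraph finishes the proof.
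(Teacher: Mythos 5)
Your overall skeleton coincides with the paper's: properness forces a multiplicity drop, hence some nonzero combination $g$ of the highest weight vectors of Remark \ref{rmk: linear indipendet hwv} lies in $\GId(\mathcal{U})\setminus\GId(\UT^D)$; one then specializes $g$ to a two-variable polynomial linear in $y$, collapses the $e_{22}$-decorated terms using $e_{22}[x_1,x_2]\equiv 0$ and $[x_1,x_2]\equiv[x_1,x_2]e_{22}$, and finally normalizes the leading coefficient (your normalization paragraph, including the right multiplication by $x$ when $M=N$, is correct, as is the independence of the $u_i$ modulo $\GId(\UT^D)$). The genuine gap is in the specialization itself. Setting all generating variables equal to $x$ and keeping a single $y$ annihilates every highest weight vector of shape $(p+q,p)$ with $p\geq 2$ and every one of shape $(p+q,p,1)$: these polynomials alternate the entries of each column of the tableau, and any column whose entries are all specialized to the same element $x$ makes the alternating sum vanish identically (for three-row shapes the column of height $3$ always receives at least two copies of $x$). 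So in all but the cases $\lambda=(n)$ and $\lambda=(q+1,1)$ your specialized polynomial is the zero polynomial, and the membership $0\in\GId(\mathcal{U})$ yields no dependence among the $u_i$ whatsoever. This is precisely why the paper, after polarizing $y\mapsto y_1+y_2$ and extracting the $y_2$-linear component, substitutes $y_1=x^2$ --- an element \emph{different} from $x$, so that the alternated columns become $(x,x^2)$ and survive --- and $y_2=[x,y]$ (and, in the three-row case, first sets $z=x^2$, again not $x$).

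The same defect undermines your nontriviality control, which you yourself flag as the delicate point. You argue that the pure part can be kept nonzero ``exactly because $g\notin\GId(UT_2^D)$'', but this inference is invalid: nonvanishing of $g$ on $\UT^D$ does not transfer to a chosen specialization of $g$, and indeed your own specialization vanishes identically in the main cases. What has to be verified is that the particular specialization $h$ satisfies $h\notin\GId(\UT^D)$; by your independence observation this is equivalent to the reduced pure part being a nonzero combination of the $u_i$. The paper secures this concretely: after the $x^2$/$[x,y]$ substitution it expands the alternations and isolates a monomial $x^a[x,y]x^b$ with $a$ maximal whose coefficient is (up to sign) a nonzero $\beta_t$ or $\gamma_t$, which then becomes the leading coefficient after expanding $[x,y]=xy-yx$. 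Without an explicit specialization that provably survives both the column alternations and the $e_{22}$-reduction, together with such a leading-term extraction, your argument does not go through.
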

	\begin{proof}
		Let $a(x),$ $a_{22}^{(i)}(x)$,  $b_{p,q}^{(j)}(x,y)$, $d_{p,q}^{(j)}(x,y)$, and $h_{p,q}^{(j)}(x,y,z)$, $0\leq i \leq n$, $0\leq j \leq q$, be the polynomial of Remark \ref{rmk: linear indipendet hwv}.
		Since $\mathcal{U}\varsubsetneqq \mathcal{V}=\gv(\UT^D)$, then there exists $\lambda\vdash n$ such that $m_{\lambda}^{\mathcal{U}}< m_{\lambda}^{\mathcal{V}}$. Thus by Theorem \ref{cocarattere di UT2D}, it follows that either
		\begin{equation}
			\label{Eq1riga}
			\alpha_1 a(x) + \sum_{i=0}^{n} \alpha_{2}^{(i)} a_{22}^{(i)}(x) \in  \GId(\mathcal{U}),
		\end{equation}
		with $ \alpha_1, \alpha_2^{(i)}$ not all zero, or
		\begin{equation}
			\label{Eq2righe}
			\sum_{i=0}^{q} \beta_i b_{p,q}^{(i)}(x,y)+ \sum_{i=0}^{q} \delta_i d_{p,q}^{(i)}(x,y) \in  \GId(\mathcal{U}),
		\end{equation}
		with $\beta_i, \delta_i$ not all zero, or
		\begin{equation}
			\label{Eq3riga}
			\sum_{i=0}^{q}\eta_i h_{p,q}^{(i)}(x,y,z)\in  \GId(\mathcal{U}),
		\end{equation}
		with $\eta_i$ not all zero.
		Suppose that \eqref{Eq2righe} holds. Then
		\begin{align*}
			f(x,y)=&\sum_{i=0}^{q}\beta_i x^{i}\underbrace{\bar{x}\cdots\tilde{x}}_{p-1}[x,y]\underbrace{\bar{y}\cdots\tilde{y}}_{p-1}x^{q-i}+ \sum_{i=0}^{q} \delta_i x^{i}\underbrace{\bar{x}\cdots\tilde{x}}_{p-1}(xe_{22}y-ye_{22}x)\underbrace{\bar{y}\cdots\tilde{y}}_{p-1}x^{q-i} \in   \GId(\mathcal{U})
			.
		\end{align*}
		If we substitute in $f(x,y)$ the variable $y$ with $y_1+y_2$, we obtain that
		\begin{align*}
			f(x,y_1,y_2)=&\sum_{i=0}^{q}\beta_i x^{i}\underbrace{\overline{x}\cdots\widetilde{x}}_{p-1}[x,y_1+y_2]\underbrace{\overline{(y_1+y_2)}\cdots\widetilde{(y_1+y_2)}}_{p-1}x^{q-i}\\
			&+ \sum_{i=0}^{q} \delta_i x^{i}\underbrace{\overline{x}\cdots\widetilde{x}}_{p-1}(xe_{22}(y_1+y_2)-(y_1+y_2)e_{22}x)\underbrace{\overline{(y_1+y_2)}\cdots\widetilde{(y_1+y_2)}}_{p-1}x^{q-i}\in \GId(\mathcal{U})
			.
		\end{align*}
		Now, let us consider in the polynomial $f(x,y_1,y_2)$ the component $f'(x,y_1,y_2)$ of degree $1$ in $y_2$. By substituting in $f'(x,y_1,y_2)$ the variable $y_1$ with $x^2$ and $y_2$ with $[x,y]$, we get that
		\begin{align*}
			&\sum_{i=0}^{q}\beta_i x^{i}\underbrace{\overline{x}\cdots\widetilde{x}}_{p-1}[x,[x,y]]\underbrace{\overline{x^2}\cdots\widetilde{x^2}}_{p-1}x^{q-i}+ \sum_{i=0}^{q} \delta_i x^{i}\underbrace{\overline{x}\cdots\widetilde{x}}_{p-1}(xe_{22}[x,y]-[x,y]e_{22}x)\underbrace{\overline{x^2}\cdots\widetilde{x^2}}_{p-1}x^{q-i}\in  \GId(\mathcal{U}).
		\end{align*}
		Since $e_{22}[x,y], \, [x,y]-[x,y]e_{22}\in \GId(UT_2^D)\subseteq\GId(\mathcal{U})$, it follows that
		\begin{equation}\label{eq: pol g}
			g(x,y)=\sum_{i=0}^{q}\beta_i \underbrace{\overline{x}\cdots\widetilde{x}}_{p-1}x[x,y]\underbrace{\overline{x^2}\cdots\widetilde{x^2}}_{p-1}x^{q-i}- \sum_{i=0}^{q} \gamma_i x^{i}\underbrace{\overline{x}\cdots\widetilde{x}}_{p-1}[x,y]x\underbrace{\overline{x^2}\cdots\widetilde{x^2}}_{p-1}x^{q-i}\in  \GId(\mathcal{U}),
		\end{equation}
		where $\gamma_i=\beta_i +\delta_i\in F$, $0\leq i \leq q$.
		
		Suppose first that $\beta_i\neq 0$ for some $0\leq i\leq q$, and let $t=\max \{i \, | \, \beta_i\neq 0$\} and $N'=\deg g(x,y)$. Since $g(x,y)\in \GId(\mathcal{U})$, we have that
		$$
		\beta_t x^{t+2p-1}[x,y]x^{N'-2p-t-1}+ \sum_{i<t+2p-1}\gamma_{i}^{'} x^{i}[x,y]x^{N'-i-2}\in \GId(\mathcal{U}),
		$$
		for some $\gamma_i^{'}\in F$. Since $\beta_t\neq 0$, we get that
		$$
		x^{t+2p}yx^{N'-2p-t-1}+\sum_{i<t+2p}\mu_{i} x^{i}yx^{N'-i-1}\in \GId(\mathcal{U}),
		$$
		for some $\mu_i\in F$. Now, if we set $N=N'-1$ and $M=t+2p$, then it follows that
		\begin{equation*}
			x^{M}yx^{N-M}+\sum_{i<M}\mu_{i} x^{i}yx^{N-i}\in \GId(\mathcal{U}),
		\end{equation*}
		for some $\mu_{i}\in F$, as required.
		
		Assume now that in \eqref{eq: pol g} $\beta_i=0$ for all $1\leq i \leq q$. Then $\gamma_i\neq 0$ for some $1\leq i \leq q$. So, let $t=\max \{i \, | \, \gamma_i \neq 0\}$ and $N'=\deg g(x,y)$ in \eqref{eq: pol g}. Then
		$$
		\gamma_t x^{t+2p-2}[x,y]x^{N'-2p-t}+ \sum_{i<t+2p-2}\gamma_{i}^{'} x^{i}[x,y]x^{N'-i-2}\in \GId(\mathcal{U}),
		$$
		for some $\gamma_i^{'}\in F$. Since $\gamma_t\neq 0$, we get that
		$$
		x^{t+2p-1}yx^{N'-2p-t-1}+\sum_{i<t+2p-1}\mu_{i} x^{i}yx^{N'-i-1}\in \GId(\mathcal{U}),
		$$
		for some $\mu_i\in F$. Now, if we set $N=N'-1$ and $M=t+2p-1$, then it follows that
		\begin{equation*}
			x^{M}yx^{N-M}+\sum_{i<M}\mu_{i} x^{i}yx^{N-i}\in \GId(\mathcal{U}),
		\end{equation*}
		for same $\mu_{i}\in F$, as required.
		
		Now, suppose that \eqref{Eq1riga} holds. Then, we have that
		\begin{equation}\label{eq:2righe cocharcter id}
			\alpha_1 x^n +  \alpha^{(0)}_{1} e_{22}x^n +\sum_{i=1}^{n} \alpha^{(i)}_{2} x^{i-1}[x,e_{22}] x^{n-i}\in  \GId(\mathcal{U}).
		\end{equation}
		Let us substitute $x$ with $x_1+x_2$ in \eqref{eq:2righe cocharcter id}, and consider the homogeneous component of degree $1$ in $x_2$. Then in this homogeneous component, we substitute $x_1$ with $x$ and $x_2$ with $[x,y]$. Thus, with similar computations as in the previous case, we reach the desired conclusion.
		
		Finally, suppose that \eqref{Eq3riga} holds in $\mathcal{U}$. By substituting in $ h_{p,q}^{(i)}(x,y,z)$ the variable $z$ with $x^2$, we obtain \eqref{Eq2righe}, and, by the first case, the proof is complete.
	\end{proof}

	\begin{proposition}\label{molteplicita limitate}
		Let $\mathcal{U}$ be a proper subvariety of $\gv(\UT^D)$. Then there exists a constant $\bar{N}$ such that $m_{\lambda}^{\mathcal{U}}\leq \bar{N}$ for any $\lambda\vdash n$, $n\geq 1$.
	\end{proposition}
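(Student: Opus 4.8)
The plan is to turn the single shift identity provided by Lemma \ref{LemPolynomialGrowth} into a uniform linear recurrence among the highest weight vectors of Remark \ref{rmk: linear indipendet hwv}, and then to bound how many of them can survive modulo $\GId(\mathcal U)$. First I would record the reduction: since $\mathcal U\subseteq\gv(\UT^D)$ and, by Theorem \ref{cocarattere di UT2D}, each irreducible component of $g\chi_n(\UT^D)$ is generated by the explicit polynomials of Remark \ref{rmk: linear indipendet hwv}, the multiplicity $m_\lambda^{\mathcal U}$ is at most the dimension, modulo $\GId(\mathcal U)$, of the span of those generators of shape $\lambda$. Hence it suffices to bound these dimensions by one constant, independent of $p$, $q$ and $n$.

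Writing the identity of Lemma \ref{LemPolynomialGrowth} as $x^M y x^{N-M}+\sum_{i<M}\mu_i x^i y x^{N-i}\in\GId(\mathcal U)$ with $M<N$, the key observation I would exploit is that each family of generators has the form $x^i B\,x^{q-i}$ for a block $B$ independent of $i$: one has $b^{(i)}_{p,q}=x^i B_b\,x^{q-i}$ and $d^{(i)}_{p,q}=x^i B_d\,x^{q-i}$ with $B_b=\underbrace{\bar{x}\cdots\tilde{x}}_{p-1}[x,y]\underbrace{\bar{y}\cdots\tilde{y}}_{p-1}$ and $B_d=\underbrace{\bar{x}\cdots\tilde{x}}_{p-1}(xe_{22}y-ye_{22}x)\underbrace{\bar{y}\cdots\tilde{y}}_{p-1}$, one has $h^{(i)}_{p,q}=x^i B_h\,x^{q-i}$ with $B_h=\underbrace{\hat{x}\cdots\tilde{x}}_{p-1}\bar{x}\bar{y}\bar{z}\underbrace{\hat{y}\cdots\tilde{y}}_{p-1}$, and finally $a^{(i)}_{22}=x^i e_{22}x^{n-i}-x^{i-1}e_{22}x^{n-i+1}$ lies in the span of the monomials $x^j e_{22}x^{n-j}$. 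As the shift identity is linear in $y$, I would substitute $y$ by any one of the fixed blocks $B\in\{B_b,B_d,B_h\}$ (respectively by the coefficient $e_{22}$), then multiply on the left by $x^a$ and on the right by $x^b$; for every $a,b\geq 0$ with $a+b+N=q$ this gives
\[
b^{(a+M)}_{p,q}+\sum_{i<M}\mu_i\, b^{(a+i)}_{p,q}\in\GId(\mathcal U),
\]
and analogously for the $d$, $h$ and $e_{22}$ families, expressing $b^{(a+M)}_{p,q}$ through vectors of strictly smaller index.

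To finish, I would sweep $a$ from $0$ to $q-N$ (assuming $q\geq N$, the range being empty otherwise), reducing in turn every $b^{(j)}_{p,q}$ with $M\leq j\leq q-N+M$ to a combination of $b^{(0)}_{p,q},\dots,b^{(M-1)}_{p,q}$. Thus the $b^{(i)}_{p,q}$, $0\leq i\leq q$, span modulo $\GId(\mathcal U)$ a space inside $\spn\{b^{(i)}_{p,q}:0\leq i<M\ \text{or}\ q-N+M<i\leq q\}$, of dimension at most $M+(N-M)=N$; for $q<N$ there are at most $q+1\leq N$ vectors to begin with. The same count bounds the $d$, $h$ and $e_{22}$ families, so each contributes at most $N$ independent generators regardless of $p$ and $q$; adding the lone generator $a(x)=x^n$ in the one-row case and allowing for the two blocks $B_b,B_d$ in the two-row case gives $m_\lambda^{\mathcal U}\leq 2N$ for all $\lambda$ (and $m_\lambda^{\mathcal U}=0$ outside the shapes $(n)$, $(p+q,p)$, $(p+q,p,1)$), so $\bar N:=2N$ works. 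The main obstacle I anticipate is justifying that substituting the fixed central block for the free variable $y$ legitimately converts the one shift identity into the uniform recurrence above and that the windowed recurrences leave exactly $N$ vectors independent however large $q$ is; the one-row case also needs the substitution of the coefficient $e_{22}$ for a variable, which is a valid substitution of the free $\UT$-algebra fixing $\UT$.
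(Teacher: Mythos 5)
Your overall strategy is exactly the paper's: substitute the fixed block for $y$ in the shift identity of Lemma \ref{LemPolynomialGrowth}, pad with powers of $x$ on both sides to obtain the recurrence $b^{(a+M)}_{p,q}\equiv\sum_{i<M}\mu_i\,b^{(a+i)}_{p,q}$ modulo $\GId(\mathcal U)$, and count the surviving indices. Your window analysis (the reduction applies only for $M\le i\le q-N+M$, leaving at most $M+(N-M)=N$ independent vectors per family) is in fact more careful than the paper's wording, which claims the reduction for every $i\ge M$; both routes end at the same constant $\bar N=2N$.

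There is, however, one genuine flaw: the one-row case. You pass from the family $a_{22}^{(i)}$ to the monomials $x^je_{22}x^{n-j}$ and then apply the shift identity after ``substituting the coefficient $e_{22}$ for the variable $y$'', asserting that this is a valid substitution of the free $\UT$-algebra. It is not. In the paper's construction every element of $W\langle X\rangle$ has degree at least one in the variables, so $e_{22}\notin W\langle X\rangle$ and $y\mapsto e_{22}$ is not an endomorphism of the free $W$-algebra; $T_W$-ideals are not closed under such replacements. Semantically, $f(x,y)\in\GId(\mathcal U)$ only guarantees that $f$ vanishes when $y$ runs over elements of an algebra $A\in\mathcal U$, whereas replacing $y$ by $e_{22}$ inserts the bimodule operator of $e_{22}$ into the monomials, and a generic algebra of the subvariety $\mathcal U$ need not contain any element realizing that operator (the matrix $e_{22}\in UT_2$ does, but $\mathcal U$ consists of abstract $\UT$-algebras, not of subalgebras of $UT_2$). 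So this step is unjustified as stated. The repair is immediate and stays inside your own framework: the generators $a_{22}^{(i)}(x)=x^{i-1}[x,e_{22}]x^{n-i}$, $1\le i\le n$, already have the block form $x^{i-1}B\,x^{n-i}$ with $B=[x,e_{22}]\in W\langle X\rangle$, so substitute $y\mapsto[x,e_{22}]$ (a legal substitution, since $[x,e_{22}]$ is a genuine generalized polynomial), pad with powers of $x$ as before, and treat $a(x)$ and $a_{22}^{(0)}(x)$ as two extra generators; this yields $m_{(n)}^{\mathcal U}\le N+2$, and the rest of your argument goes through unchanged.
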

	\begin{proof}
		By Lemma \ref{LemPolynomialGrowth},  there exists $N$ such that
		\begin{equation}
			\label{EqNandM}
			x^{M}yx^{N-M}+\sum_{i<M}\mu_{i} x^{i}yx^{N-i} \in \GId(\mathcal{U}),
		\end{equation}
		for some $\mu_{i}\in F$ and a suitable $M<N$. We shall prove that $m_{\lambda}^{\mathcal{U}}\leq 2N$ for all $\lambda\vdash n$. By Theorem \ref{cocarattere di UT2D} it is enough to consider the cases when either $\lambda=(n)$, or $\lambda=(p+q,p)$, or $\lambda=(p+q,p,1)$.
		
		We prove the statement for $\lambda=(p+q,p)$. The other cases will follow with similar arguments.
		
		\noindent If $q < N$ there is nothing to prove. So, let us assume that $q\geq N$ and consider the polynomials $b^{(i)}_{p,q}(x,y)$ and $d^{(i)}_{p,q}(x,y)$, $0\leq i \leq q$, defined in Remark \ref{rmk: linear indipendet hwv}. 
		Notice that from relation \eqref{EqNandM} it follows that
		\begin{equation}\label{eq: MandN commutatore}
			x^{M}\underbrace{\bar{x}\cdots\tilde{x}}_{p-1}[x,y]\underbrace{\bar{y}\cdots\tilde{y}}_{p-1}x^{N-M}\equiv\sum_{i<M}\mu_{i} x^{i}\underbrace{\bar{x}\cdots\tilde{x}}_{p-1}[x,y]\underbrace{\bar{y}\cdots\tilde{y}}_{p-1}x^{N-i} \ (\md\GId(\mathcal{U})),
		\end{equation}
		and
		\begin{equation}\label{eq: MandN e_22}
			x^{M}\underbrace{\bar{x}\cdots\tilde{x}}_{p-1}(xe_{22}y-ye_{22}x)\underbrace{\bar{y}\cdots\tilde{y}}_{p-1}x^{N-M}\equiv\sum_{i<M}\mu_{i} x^{i}\underbrace{\bar{x}\cdots\tilde{x}}_{p-1}(xe_{22}y-ye_{22}x)\underbrace{\bar{y}\cdots\tilde{y}}_{p-1}x^{N-i} \ (\md\GId(\mathcal{U})).
		\end{equation}
		Hence, since $q \geq N$, we can apply the relation \eqref{eq: MandN commutatore} to any polynomial $b^{(i)}_{p,q}(x,y)$
		such that $i\geq M$, and, as a consequence, we get that
		$$b^{(i)}_{p,q}(x,y)\equiv\sum_{j<M}b^{(j)}_{p,q}(x,y) \ (\md\GId(\mathcal{U})).$$
		Similarly, since $q \geq N$, we can apply the relation \eqref{eq: MandN e_22} to any polynomial $d^{(i)}_{p,q}(x,y)$ such that $i\geq M$, and we obtain that
		$$d^{(i)}_{p,q}(x,y)\equiv\sum_{j<M}d^{(j)}_{p,q}(x,y) \ (\md\GId(\mathcal{U})).$$
		Therefore, it follows that $m_{\lambda}^{\mathcal{U}}\leq 2M \leq 2N=\bar{N}$, as required.
	\end{proof}

	\begin{theorem}\label{Thm: UT2D APG}
		The variety of $UT_2$-algebras generated by $\UT^D$ has almost polynomial growth.
	\end{theorem}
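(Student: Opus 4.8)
**The plan is to prove almost polynomial growth in two parts: first that $\gv(\UT^D)$ has exponential (non-polynomial) growth, and second that every proper subvariety has polynomially bounded codimension.**

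The first part is immediate from the machinery already established. By Theorem \ref{cocarattere di UT2D}, the multiplicity of $\chi_{(n)}$ equals $n+2$, which grows without bound. Since $g\chi_n(\UT^D)$ contains an irreducible constituent of degree $d_{(n)}=1$ with multiplicity $n+2$, and the two-row partitions $(p+q,p)$ contribute terms with multiplicity $2(q+1)$ attached to characters of degree growing like $\binom{n}{p}$, the sequence $gc_n(\UT^D)=n2^{n-1}+2$ (Theorem \ref{base del Tideale D algebra}) is visibly not polynomially bounded. Hence $\gv(\UT^D)$ itself has exponential codimension growth.

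The heart of the matter is the second part, and here the key inputs have already been assembled. Let $\mathcal{U}\subsetneq\gv(\UT^D)$ be a proper subvariety. By Proposition \ref{molteplicita limitate}, there is a constant $\bar N$ bounding every multiplicity $m_\lambda^{\mathcal U}$ uniformly in $n$ and $\lambda$. By Theorem \ref{cocarattere di UT2D} the only partitions $\lambda\vdash n$ that can appear in $g\chi_n(\UT^D)$ — and hence the only ones that can appear in $g\chi_n(\mathcal U)$ — are $(n)$, the two-row shapes $(p+q,p)$, and the three-row shapes $(p+q,p,1)$. The plan is to bound $gc_n(\mathcal U)=\sum_{\lambda\vdash n}m_\lambda^{\mathcal U}d_\lambda$ by estimating, for each admissible family, both the number of such partitions and the relevant degrees $d_\lambda$, then multiplying by $\bar N$.

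The crucial observation is that the bound from Lemma \ref{LemPolynomialGrowth} does more than bound multiplicities: it forces the admissible partitions to have a \emph{bounded} second row. Indeed, the relation \eqref{EqNandM} shows that for $q\geq N$ every high-index highest weight vector is reducible modulo $\GId(\mathcal U)$, so the multiplicity can only be nonzero when $q<N$ — equivalently, the number of columns of length $\geq 2$ is bounded by $N$. Thus only partitions $\lambda$ whose shape, after removing the first row, fits inside a fixed rectangle of width $<N$ can contribute. For such partitions the degree $d_\lambda$ is polynomially bounded in $n$: writing $\lambda=(p+q,p)$ with $p\leq n/2$ and $q<N$, the hook-length formula gives $d_\lambda=\binom{n}{p}\frac{n-2p+1}{n-p+1}$, and since $p$ ranges over at most $N$ values (those with $q=n-2p<N$, i.e.\ $p>(n-N)/2$) while each such binomial coefficient $\binom{n}{p}$ with $p$ close to $n/2$ is bounded — no, more carefully, one restricts to the finitely many shapes actually surviving and sums their degrees. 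The point is that only $O(1)$ partitions survive for each $n$ and each contributes a degree that is a polynomial in $n$, so $gc_n(\mathcal U)\leq \bar N\cdot(\text{poly in }n)$. An analogous estimate handles the three-row shapes $(p+q,p,1)$ and the trivial contribution from $(n)$. Therefore $\mathcal U$ has polynomial growth, and combined with the first part this proves almost polynomial growth.

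**The main obstacle** I expect is making the degree estimate in the second part fully rigorous: Proposition \ref{molteplicita limitate} gives a uniform bound on multiplicities but, by itself, a bounded multiplicity times a character of degree $\binom{n}{p}$ can still grow exponentially when $p\sim n/2$. The decisive point — which must be extracted carefully from the $M<N$ relation in Lemma \ref{LemPolynomialGrowth} rather than merely from Proposition \ref{molteplicita limitate} — is that the surviving two-row partitions have bounded $q$, forcing $p$ to be within $N$ of $n/2$, and that for \emph{these} shapes the hook formula yields $d_{(p+q,p)}=\binom{n}{p}\frac{q+1}{n-p+1}$, which one checks is polynomially bounded precisely because the arm lengths are bounded. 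Turning the reducibility relations \eqref{eq: MandN commutatore} and \eqref{eq: MandN e_22} into a clean statement that the \emph{entire} codimension sequence is polynomially bounded is the step requiring genuine care.
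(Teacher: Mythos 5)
Your overall two-part structure (exponential growth of $\gv(\UT^D)$ itself, polynomial growth of every proper subvariety) matches the paper, and the first part is fine. But the second part has a genuine gap, and in fact the key claim you make is false in two distinct ways. First, Lemma \ref{LemPolynomialGrowth} does not force the surviving partitions to have $q<N$: relation \eqref{EqNandM} shows that for $q\geq N$ the highest weight vectors $b^{(i)}_{p,q}$, $d^{(i)}_{p,q}$ with $i\geq M$ are congruent, modulo $\GId(\mathcal U)$, to linear combinations of those with $i<M$. That is exactly the content of Proposition \ref{molteplicita limitate}, and it yields $m_\lambda^{\mathcal U}\leq 2M$, not $m_\lambda^{\mathcal U}=0$; nothing there makes multiplicities vanish for large $q$, so ``the multiplicity can only be nonzero when $q<N$'' is a non sequitur. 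Second, even if that claim were true it would not give polynomial growth, because you have the dependence on the shape backwards: bounding the arm $q=\lambda_1-\lambda_2$ while letting the second row $p=\lambda_2$ grow like $n/2$ leaves exponential degrees. Indeed, for $\lambda=(p,p)$ the hook formula gives $d_{(p,p)}=\frac{1}{p+1}\binom{2p}{p}$, a Catalan number, which grows like $4^{p}p^{-3/2}$; more generally $\binom{n}{p}\frac{q+1}{n-p+1}$ with $q$ bounded and $p$ near $n/2$ is exponential in $n$. What is actually needed is a bound on the second row (equivalently, on the number of boxes below the first row), since $\lambda_1\geq n-c$ is what forces $d_\lambda\leq n^{c}$.

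This bound on $\lambda_2$ is precisely the step the paper supplies and your proposal is missing. Starting from \eqref{EqNandM}, the paper multilinearizes, substitutes $y\mapsto[y_1,y_2]$, multiplies by fresh variables and alternates them in pairs, using that $[x_1,x_2][x_3,x_4]$, $e_{22}[x_1,x_2]$ and $[x_1,x_2]-[x_1,x_2]e_{22}$ lie in $\GId(\UT^D)\subseteq\GId(\mathcal U)$, to produce the new identities \eqref{uccidi il primo hwv} and \eqref{uccidi il secondo hwv}: polynomials alternating on $N+1$ disjoint pairs (and a three-variable variant) belonging to $\GId(\mathcal U)$. These force $m^{\mathcal U}_{(N+1,N+1)}=m^{\mathcal U}_{(N+1,N+1,1)}=0$, hence $m^{\mathcal U}_{\lambda}=0$ whenever $\lambda_2\geq N+2$. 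Only for the shapes surviving this cut does one have $d_\lambda\leq n^{N+1}$, and then Proposition \ref{molteplicita limitate} (uniformly bounded multiplicities) together with the fact that at most $(N+1)^2$ such shapes exist for each $n$ gives $gc_n(\mathcal U)$ polynomially bounded. In short: the multiplicity bound of Proposition \ref{molteplicita limitate} plus Theorem \ref{cocarattere di UT2D} alone cannot conclude the argument — as your own ``main obstacle'' paragraph correctly suspected — and the resolution is not a bound on the arm $q$ but the construction of new alternating identities bounding the leg $\lambda_2$.
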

	\begin{proof}
		Let $\mathcal U$ be a proper subvariety of $\mathcal V = \gv(UT_2^D).$ We shall prove that $\mathcal U$ has polynomial growth of the codimensions. By Lemma \ref{LemPolynomialGrowth}, there exist constant $M<N$ such that
		$$
		x^Myx^{N-M} + \sum_{i<M}\mu_i x^iyx^{N-i}\in\GId(\mathcal U)
		$$
		for some $\mu_i\in F.$ By a standard multilinearization process (see for instance \cite[Theorem 1.3.8]{GiambrunoZaicevbook}), we get
		$$
		\sum_{\sigma\in S_N} x_{\sigma(1)}\cdots x_{\sigma(M)}yx_{\sigma(M+1)}\cdots x_{\sigma(N)} + \sum_{i<M}\sum_{\sigma\in S_N}\mu_i x_{\sigma(1)}\cdots x_{\sigma(i)}yx_{\sigma(i+1)}\cdots x_{\sigma (N)}\in\GId(\mathcal U)
		$$
		where $x_1, \ldots, x_N$ are new variables.
		
		In the previous identity, we substitute $y$ by $[y_1,y_2],$ we multiply on the right by $z_1\cdots z_M$ and we alternate $x_i$ with $z_i,$ for all $1\leq i\leq M.$ Since $[x_1,x_2][x_3,x_4]\in \GId(UT_2^D) \subseteq \GId(\mathcal{U}),$ it follows that
		$$ 
		\bar{x}_1\cdots \tilde{x}_M[y_1,y_2]\bar{z}_1\cdots \tilde{z}_Mx_{M+1}\cdots x_{N}\in\GId(\mathcal U).
		$$
		Now, we multiply on the left by $z_{M+1}\cdots z_N$ and we alternate $x_j$ with $z_j$ for all $M+1\leq j\leq N.$ It readily follows that
		\begin{equation}\label{uccidi il primo hwv}
			\bar{x}_1\cdots \tilde{x}_N[y_1,y_2]\bar{z}_1\cdots \tilde{z}_N\in\GId(\mathcal U).
		\end{equation}
		Take the previous identity and substitute firstly $y_1$ by $y_1e_{22}$ and, secondly, $y_2$ by $y_2e_{22}.$ We get
		\begin{equation*}
			\begin{split}
				&\bar{x}_1\cdots \tilde{x}_N(y_1e_{22}y_2-y_2y_1e_{22})\bar{z}_1\cdots \tilde{z}_N\in \GId(\mathcal{U}), \\
				&\bar{x}_1\cdots \tilde{x}_N(y_1y_2e_{22}-y_2e_{22}y_1)\bar{z}_1\cdots \tilde{z}_N\in\GId(\mathcal{U}) .
			\end{split}
		\end{equation*}
		Let us sum the previous identities and, since $[x_1,x_2]-[x_1,x_2]e_{22}\in\GId(UT_2^D)\subseteq\GId(\mathcal U)$ and \eqref{uccidi il primo hwv} holds, we obtain
		$$
		\bar{x}_1\cdots \tilde{x}_N(y_1e_{22}y_2-y_2e_{22}y_1)\bar{z}_1\cdots \tilde{z}_N\in \GId(\mathcal{U}).
		$$
		By renaming the variables, we get
		\begin{equation}\label{uccidi il secondo hwv}
			\bar{x}_1\cdots \tilde{x}_N \hat{x}_{N+1}e_{22}\hat{z}_{N+1}\bar{z}_1\cdots \tilde{z}_N\in\GId(\mathcal U).
		\end{equation}
		The identities $\eqref{uccidi il primo hwv}$ and $\eqref{uccidi il secondo hwv}$ tell us that the irreducible $S_{2(N+1)}$-character corresponding to the partition $\lambda= (N+1,N+1)$ participates into the $2(N+1)$th generalized cocharacter of $\mathcal U$ with a zero multiplicity, i.e., $m_{(N+1,N+1)}^{\mathcal U}=0.$
		
		Finally, take identity \eqref{uccidi il primo hwv}, multiply it on the right by $y_{N+1}$ and alternate $y_1,$ $y_2$ and $y_{N+1}.$ By renaming as before the variable $y_1$ by $x_{N+1}$ and $y_2$ by $z_{N+1},$ we get
		$$
		\bar{x}_1\cdots \tilde{x}_N \hat{x}_{N+1}\hat{y}_{N+1}\hat{z}_{N+1}\bar{z}_1\cdots \tilde{z}_N\in\GId(\mathcal U).
		$$
		Thus, as in the previous case, $m_{(N+1,N+1,1)}^{\mathcal{U}}=0.$
		
		Hence, if $\lambda\vdash n$ is such that $\lambda_2\geq N+2$ then $m_{\lambda}^{\mathcal U}=0$ or, equivalently, if $\chi_\lambda$ appears with a non-zero multiplicity in the generalized $S_n$-cocharacter of $\mathcal U,$ then $\lambda$ must contain at most $N+1$ boxes below the first row. Therefore
		$$
		g\chi_n(\mathcal U) = \sum_{\substack{\lambda\vdash n \\ |\lambda|- \lambda_1\leq N+1}}m_{\lambda}^{\mathcal U}\chi_\lambda.
		$$
		Recall that $\lambda_i$ stands for the number of boxes of the $i$th row of $\lambda.$
		
		Since $|\lambda|-\lambda_1\leq N+1,$ then $\lambda_1\geq n-(N+1)$ and by the hook formula
		$$
		d_\lambda = \chi_\lambda(1) \leq \frac{n!}{(n-(N+1))!} \leq n^{N+1}.
		$$
		We are now in a position to reach the goal, in fact by the previous remark and by Proposition \ref{molteplicita limitate}
		$$
		gc_n(\mathcal{U}) = g\chi_n(\mathcal U)(1) = \sum_{\substack{\lambda\vdash n \\ |\lambda|- \lambda_1\leq N+1}}m_{\lambda}^{\mathcal U}d_\lambda\leq  \sum_{\substack{\lambda\vdash n \\ |\lambda|- \lambda_1\leq N+1}} \bar{N} n^{N+1}\leq (N+1)^2N'n^{N+1},
		$$
		since the number of partitions such that $|\lambda| - \lambda_1\leq N+1$ is bounded by $(N+1)^2.$ Therefore $gc_n(\mathcal{U})$ is polynomially bounded and we are done.
	\end{proof}

	Let us denote by $UT_2^F$ the $F$-algebra $UT_2$ regarded as $UT_2$-algebra, i.e., $UT_2^F$ has a structure of $UT_2$-bimodule where $1:=1_{UT_2}$ acts by left and right by multiplication, $e_{22}\cdot a = a\cdot e_{22}=0$ and
	$
	e_{12}\cdot a = a\cdot e_{12}=0
	$ 
	for all $a\in UT_2.$ Clearly, from the definition of this new action it readily follows that $e_{22}x\equiv 0$, $xe_{22}\equiv 0$, $e_{12}x\equiv 0$ and $xe_{12}\equiv 0$ are generalize identities of $UT_2^F$. Thus, we are dealing with ordinary polynomial identities, and by the results in \cite{Malcev1971,BenantiGiambrunoSviridova2004,Kemer1979} we have the following.
	
	\begin{theorem}\label{Thm: Identità ordinarie}
		Let $UT_2^F$ be the $UT_2$-algebra with the above action. Then $\GId(UT_2^F)$ is generated, as $T_{UT_2}$-ideal,  by the following polynomials:
		$$
		e_{22}x; \quad xe_{22}; \quad [x_1,x_2][x_3,x_4].
		$$
		Moreover, $gc_n(UT_2^F) = 2^{n-1}(n-2)+2.$
	\end{theorem}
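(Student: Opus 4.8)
The plan is to exploit the fact that, for this action, the nontrivial coefficients $e_{22}$ and $e_{12}$ annihilate everything they touch, so that the whole theory collapses onto the ordinary polynomial identities of $UT_2$, for which the generator $[x_1,x_2][x_3,x_4]$ (Mal'cev's theorem, see \cite{Malcev1971}) and the codimension $c_n(UT_2)=2^{n-1}(n-2)+2$ are classical. First I would record that $e_{22}x,\,xe_{22},\,e_{12}x,\,xe_{12}$ all lie in $\GId(UT_2^F)$: indeed each evaluates, under the given bimodule action, to $e_{22}\cdot a=a\cdot e_{22}=0$ or $e_{12}\cdot a=a\cdot e_{12}=0$. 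Writing $I$ for the $T_{UT_2}$-ideal generated by the three listed polynomials, the inclusion $I\subseteq\GId(UT_2^F)$ is then immediate, since $[x_1,x_2][x_3,x_4]\in\Id(UT_2)\subseteq\GId(UT_2^F)$.

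The first real step is to show that $I$ already contains the other two degree-one identities $e_{12}x$ and $xe_{12}$. For $e_{12}x$ this is a single application of the left bimodule action, $e_{12}\cdot(e_{22}x)=(e_{12}e_{22})x=e_{12}x$, using $e_{12}e_{22}=e_{12}$ in $UT_2$. The point that deserves attention is $xe_{12}$: it cannot be produced from $xe_{22}$ by the right bimodule action, because $e_{22}e_{12}=0$ forces $(xe_{22})e_{12}=x(e_{22}e_{12})=0$. Instead I would use that a $T_{UT_2}$-ideal is stable under substitutions and apply the endomorphism $x\mapsto xe_{12}$ to $xe_{22}\in I$; since $\varphi(wfv)=w\varphi(f)v$, this gives $(xe_{12})e_{22}=x(e_{12}e_{22})=xe_{12}\in I$. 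This substitution trick (again exploiting $e_{12}e_{22}=e_{12}$) is the one genuinely nonobvious point, and I expect it to be the main obstacle to a fully rigorous argument.

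With all four degree-one identities in $I$, the reduction to the ordinary case is routine. By Proposition \ref{Criterion trivial polynomials} one has $e_{22}xe_{22}=e_{22}x$, $e_{12}xe_{22}=e_{12}x$, $e_{22}xe_{12}=0$ and $e_{12}xe_{12}=0$, so in any generalized monomial every coefficient $w_{i_k}\neq 1$ is adjacent to a variable. Factoring such a monomial as a product having $e_{22}x_j$, $x_je_{22}$, $e_{12}x_j$ or $x_je_{12}$ as a left or right factor, and invoking that $I$ is an ideal, shows that every generalized monomial carrying a coefficient different from $1$ is $\equiv 0 \pmod I$. Hence $GP_n$ is spanned modulo $I$ by the ordinary monomials $x_{\sigma(1)}\cdots x_{\sigma(n)}$, and on these $I$ restricts to the ordinary $T$-ideal $\Id(UT_2)=\langle[x_1,x_2][x_3,x_4]\rangle$.

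Finally, I would pin down the codimension to conclude $I=\GId(UT_2^F)$. Since an ordinary polynomial is evaluated on $UT_2^F$ exactly as on $UT_2$ (the unit $1$ acts as the identity), the ordinary part of $\GId(UT_2^F)$ is precisely $\Id(UT_2)$; combined with the reduction above this yields $gc_n(UT_2^F)=\dim_F P_n/(P_n\cap\Id(UT_2))=c_n(UT_2)=2^{n-1}(n-2)+2$, the last equality being the known codimension of $UT_2$. The same reduction shows $GP_n/(GP_n\cap I)$ has dimension at most $c_n(UT_2)$, so $I\subseteq\GId(UT_2^F)$ together with the matching codimensions forces $I=\GId(UT_2^F)$, completing the proof.
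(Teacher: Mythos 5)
Your proof is correct and follows the same route as the paper: collapse the generalized theory onto the ordinary polynomial identities of $UT_2$ and then quote the classical facts that $\Id(UT_2)$ is generated by $[x_1,x_2][x_3,x_4]$ and that $c_n(UT_2)=2^{n-1}(n-2)+2$. In fact the paper offers no detailed proof at all — only the remark that $e_{22}x$, $xe_{22}$, $e_{12}x$, $xe_{12}$ are generalized identities of $UT_2^F$, followed by the citations \cite{Malcev1971,BenantiGiambrunoSviridova2004,Kemer1979} — so the one place where you genuinely add content is the verification that $e_{12}x$ and $xe_{12}$ lie in the $T_{UT_2}$-ideal $I$ generated by the three listed polynomials; this is needed for the stated generating set to suffice (the theorem omits $e_{12}x$ and $xe_{12}$ from the list) and the paper never addresses it. Your derivation of this point is right: $e_{12}x=e_{12}\cdot(e_{22}x)\in I$ because $T_{UT_2}$-ideals are closed under the bimodule action and $e_{12}e_{22}=e_{12}$, while $xe_{12}$ cannot be reached from $xe_{22}$ by the right action alone (indeed $e_{22}e_{12}=0$), but the substitution $x\mapsto xe_{12}$, under which $T_{UT_2}$-ideals are stable, turns $xe_{22}$ into $(xe_{12})e_{22}=xe_{12}$. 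The remaining steps — every generalized monomial carrying a coefficient different from $1$ vanishes modulo $I$, hence $GP_n$ is spanned modulo $I$ by the ordinary monomials, and the dimension comparison $\dim_F GP_n/(GP_n\cap I)\leq c_n(UT_2)=gc_n(UT_2^F)$ together with $I\subseteq\GId(UT_2^F)$ forces $I=\GId(UT_2^F)$ — are exactly the implicit content of the paper's reduction, carried out correctly.
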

	
	\begin{theorem}
		Let $g\chi_n(\UT^F)=\sum_{\lambda\vdash n} m_{\lambda} \chi_{\lambda}$ be the $n$th generalized cocharacter of $\UT^F.$ Then
		\begin{equation*}
			m_{\lambda} =\begin{cases}
				1, & \mbox{ if } \lambda=(n) \\
				q+1, & \mbox{ if } \lambda=(p+q,p) \mbox{ or }  \lambda=(p+q,p,1) \\
				0, & \mbox{ in all other cases}
			\end{cases}.
		\end{equation*}
	\end{theorem}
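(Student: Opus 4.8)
The plan is to reduce everything to the ordinary cocharacter of $UT_2$ and then to close the estimate by the same counting carried out in Theorem \ref{Thm: gcocharacter UT_2}. The starting remark is that, by the very definition of the action on $UT_2^F$, the basis elements $e_{22}$ and $e_{12}$ annihilate $UT_2$ on both sides; hence any generalized monomial of $GP_n$ carrying at least one coefficient different from $1$ evaluates to zero on $UT_2^F$ and thus lies in $\GId(UT_2^F)$. Therefore $GP_n(UT_2^F)$ is spanned, modulo $\GId(UT_2^F)$, by the ordinary monomials $x_{\sigma(1)}\cdots x_{\sigma(n)}$, $\sigma\in S_n$, so that $GP_n(UT_2^F)\cong P_n(UT_2)$ as left $S_n$-modules and $g\chi_n(UT_2^F)$ is exactly the ordinary cocharacter of $UT_2$.

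From here I would obtain the three lower bounds by re-examining the highest weight vectors of Remark \ref{rmk: linear indipendet hwv}, discarding those that collapse. For $\lambda=(n)$ the only surviving vector is $a(x)=x^n$ (the $a_{22}^{(i)}$ vanish), and it is not an identity, so $m_{(n)}\geq 1$. For $\lambda=(p+q,p)$ only the commutator vectors $b^{(i)}_{p,q}(x,y)$, $0\leq i\leq q$, survive (the $e_{22}$-vectors $d^{(i)}_{p,q}$ vanish), and their linear independence modulo $\GId(UT_2^F)$ is granted by the very same Vandermonde evaluations used in Lemma \ref{lem: lower multiplicities 2 rows}, whence $m_{(p+q,p)}\geq q+1$. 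For $\lambda=(p+q,p,1)$ the vectors $h^{(i)}_{p,q}(x,y,z)$ carry no special coefficient, so they are unaffected and remain independent, giving $m_{(p+q,p,1)}\geq q+1$.

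It then remains to verify that these minimal values already exhaust the codimension, which is where I would reuse the degree sums already established. Since $d_{(n)}=1$ and $gc_n(UT_2^F)=\sum_{\lambda\vdash n}m_\lambda d_\lambda$, combining the bounds above with \eqref{eq: comp car 2 row} and \eqref{eq: comp car 3 row} gives
$$
\sum_{\lambda\vdash n}m_\lambda d_\lambda\geq 1+(2^n-n-1)+\big((n-4)2^{n-1}+n+2\big)=(n-2)2^{n-1}+2=gc_n(UT_2^F),
$$
the last equality being Theorem \ref{Thm: Identità ordinarie}. Consequently the inequality is forced to be an equality, which simultaneously pins the three multiplicities to their lower bounds and forbids any other partition from occurring. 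The only delicate point is the collapse itself: one must check carefully that every $e_{22}$- and $e_{12}$-bearing highest weight vector really does vanish on $UT_2^F$, since this is precisely what lowers the first-row and two-row multiplicities from the values of Theorem \ref{Thm: gcocharacter UT_2} to $1$ and $q+1$, while the three-row multiplicity, carried by coefficient-free vectors, is left unchanged.
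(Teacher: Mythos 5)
Your proposal is correct, and its opening step --- observing that $e_{22}$ and $e_{12}$ act as zero on both sides of $UT_2^F$, so that every generalized monomial carrying a coefficient different from $1$ lies in $\GId(UT_2^F)$ and hence $GP_n(UT_2^F)\cong P_n(UT_2)$ as $S_n$-modules --- is exactly the reduction the paper makes. Where you genuinely diverge is in what happens next: the paper treats the resulting statement as a known fact about the \emph{ordinary} cocharacter of $UT_2$ and settles it by citation (Malcev, Benanti--Giambruno--Sviridova, Kemer), offering no further argument, whereas you re-derive it inside the paper's own framework. You keep only the coefficient-free highest weight vectors $a(x)$, $b^{(i)}_{p,q}$, $h^{(i)}_{p,q}$ (correctly noting that every $e_{22}$- or $e_{12}$-bearing vector vanishes identically on $UT_2^F$), establish their independence by the same Vandermonde evaluations as in Lemmas \ref{lem: lower multiplicities 2 rows} and \ref{lem: lower multiplicities 3 rows} --- which is legitimate, since those particular evaluations involve no coefficients, and in the original lemmas the substitutions used for the $\alpha_i$'s already annihilate the $c$- and $d$-vectors --- and then close the argument by comparing $\sum_\lambda m_\lambda d_\lambda$ against $gc_n(UT_2^F)$ using the purely combinatorial identities \eqref{eq: comp car 2 row} and \eqref{eq: comp car 3 row}; the arithmetic checks out, as $1+(2^n-n-1)+\bigl((n-4)2^{n-1}+n+2\bigr)=(n-2)2^{n-1}+2$, which is the value given in Theorem \ref{Thm: Identità ordinarie}, so equality is forced, the three multiplicities are pinned to their lower bounds, and all other partitions are excluded. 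What your route buys is self-containedness: the multiplicity formula no longer rests on external references, but only on the codimension value of Theorem \ref{Thm: Identità ordinarie} (which in the paper is itself justified by citation, so a fully independent treatment would still need that count). What the paper's route buys is brevity, at the cost of leaving the entire cocharacter computation to the literature.
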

	
	\begin{theorem}\label{Thm: ordinary identities APG}
		The variety of $UT_2$-algebras generated by $\UT^F$ has almost polynomial growth.
	\end{theorem}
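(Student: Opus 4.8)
The plan is to exploit the fact that the $UT_2$-bimodule structure carried by $\UT^F$ is essentially trivial, so that the generalized theory of $\UT^F$ collapses onto the \emph{ordinary} polynomial identity theory of $UT_2$, where Kemer's theorem \cite{Kemer1979} already guarantees almost polynomial growth. Concretely, since $e_{22}x,\,xe_{22},\,e_{12}x,\,xe_{12}\in\GId(\UT^F)$ by Theorem \ref{Thm: Identità ordinarie}, I would first record the following collapse: substituting arbitrary generalized polynomials into these four identities and using that $\GId(\UT^F)$ is a $T_{\UT}$-ideal (hence closed under the $UT_2$-bimodule action and under left/right multiplication by generalized polynomials) shows that \emph{any} multilinear monomial $w_{i_0}x_{\sigma(1)}w_{i_1}\cdots x_{\sigma(n)}w_{i_n}$ carrying a coefficient $w_{i_j}\in\{e_{22},e_{12}\}$ in any position — leading, trailing, or interior — already lies in $\GId(\UT^F)$. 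Consequently, modulo $\GId(\UT^F)$ only the ordinary monomials $x_{\sigma(1)}\cdots x_{\sigma(n)}$ survive, the inclusion of the ordinary multilinear space $P_n$ into $GP_n$ induces an isomorphism $P_n/(P_n\cap\Id(\UT))\cong GP_n(\UT^F)$, and in particular $gc_n(\UT^F)=c_n(\UT)=2^{n-1}(n-2)+2$. Since this grows exponentially, $\gv(\UT^F)$ is not of polynomial growth.

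Next I would set up an order-preserving, codimension-preserving correspondence between the generalized subvarieties of $\gv(\UT^F)$ and the ordinary subvarieties of $\V(\UT)$. Any generalized subvariety $\mathcal U\subseteq\gv(\UT^F)$ satisfies $\GId(\mathcal U)\supseteq\GId(\UT^F)$, so $\GId(\mathcal U)$ contains the four generators above; by the collapse, $\GId(\mathcal U)$ is determined by its ordinary part $J:=\GId(\mathcal U)\cap\F$, which is an ordinary $T$-ideal containing $\Id(\UT)$, and $gc_n(\mathcal U)$ equals the ordinary codimension $c_n$ of $J$. Conversely, every ordinary $T$-ideal $J\supseteq\Id(\UT)$ recovers a generalized subvariety. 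Observing that $\GId(\UT^F)\cap\F=\Id(\UT)$ (an ordinary polynomial vanishes on $\UT^F$ iff it vanishes on $UT_2$ as an $F$-algebra), the correspondence turns a proper generalized subvariety $\mathcal U\subsetneq\gv(\UT^F)$ precisely into a proper ordinary subvariety of $\V(\UT)$: indeed, if $J=\Id(\UT)$ then $\GId(\mathcal U)=\GId(\UT^F)$, contradicting properness.

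Finally I would invoke Kemer's theorem. Since $\V(\UT)$ has almost polynomial growth, every proper ordinary subvariety of $\V(\UT)$ has a polynomially bounded codimension sequence; transporting this through the correspondence, every proper generalized subvariety $\mathcal U\subsetneq\gv(\UT^F)$ has $gc_n(\mathcal U)=c_n(J)$ polynomially bounded. Together with the exponential growth of $gc_n(\UT^F)$ established above, this yields that $\gv(\UT^F)$ has almost polynomial growth. The main obstacle is the very first step: justifying rigorously that a nontrivial coefficient in \emph{any} interior position, not merely at the ends, forces the whole monomial into $\GId(\UT^F)$, so that the generalized and ordinary theories genuinely coincide. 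Once this collapse is secured, the lattice correspondence and the appeal to Kemer's theorem are routine.
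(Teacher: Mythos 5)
Your proposal is correct and takes essentially the same route as the paper: the paper justifies this theorem precisely by observing that the trivial actions of $e_{22}$ and $e_{12}$ collapse the generalized identity theory of $UT_2^F$ onto the ordinary PI theory of $UT_2$ (``we are dealing with ordinary polynomial identities''), and then invoking the known results of \cite{Malcev1971,BenantiGiambrunoSviridova2004,Kemer1979}, in particular Kemer's almost polynomial growth theorem for $\V(UT_2)$. Your write-up simply makes explicit the two steps the paper leaves implicit — that a coefficient $e_{22}$ or $e_{12}$ in any position (interior included, via $u\,w\,v = u\cdot(wv)$ and the $T_{UT_2}$-ideal property) kills a monomial, and the resulting codimension-preserving correspondence between generalized subvarieties of $\gv(UT_2^F)$ and ordinary subvarieties of $\V(UT_2)$ — both of which are sound.
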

	
	Notice that from Theorems \ref{basedelTideale} and \ref{Thm: Identità ordinarie} it follows that $UT_2^F\notin \gv(UT_2)$. Also, as a consequence of Theorems \ref{base del Tideale D algebra} and \ref{Thm: Identità ordinarie} we have that $\gI(UT_2^F)\nsubseteq \gI(UT_2^D)$ and $\gI(UT_2^D)\nsubseteq \gI(UT_2^F)$. Thus by Theorems \ref{Thm: UT2D APG} and \ref{Thm: ordinary identities APG} we have the following.

	\begin{corollary}
		\label{cor:almost polynomial growth}
		The algebras $UT_2^F$ and $UT_2^D$ generate two distinct varieties of $UT_2$-algebras of almost polynomial growth.
	\end{corollary}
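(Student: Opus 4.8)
The plan is to obtain the corollary as a direct synthesis of the main results of this section, since the genuinely substantive work---establishing almost polynomial growth for each of the two varieties---has already been carried out in Theorems \ref{Thm: UT2D APG} and \ref{Thm: ordinary identities APG}. I would first invoke these two theorems to conclude that both $\gv(UT_2^D)$ and $\gv(UT_2^F)$ have almost polynomial growth of the generalized codimensions; this accounts for half of the statement with no further argument needed.

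What remains is to verify that the two generalized varieties are genuinely distinct. Since a generalized variety is completely determined by its $T_{UT_2}$-ideal of generalized identities, it suffices to show $\GId(UT_2^F)\neq\GId(UT_2^D)$, and I would do this by establishing non-containment in both directions. On one side, the polynomial $e_{22}x$ lies in $\GId(UT_2^F)$ by Theorem \ref{Thm: Identità ordinarie} but is not a generalized identity of $UT_2^D$, because under the $UT_2^D$-action $e_{22}$ multiplies in the ordinary way and the substitution $x=e_{22}$ yields $e_{22}\neq 0$; hence $\gI(UT_2^F)\nsubseteq\gI(UT_2^D)$. On the other side, the generator $[x_1,x_2]-[x_1,x_2,e_{22}]$ of $\GId(UT_2^D)$ (Theorem \ref{base del Tideale D algebra}) fails to be an identity of $UT_2^F$: under that action $e_{22}$ annihilates every element, so the polynomial collapses to $[x_1,x_2]$, which is not an identity of $UT_2$; thus $\gI(UT_2^D)\nsubseteq\gI(UT_2^F)$.

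As an even more economical alternative, I could read off distinctness directly from the generalized codimensions already computed, namely $gc_n(UT_2^D)=n2^{n-1}+2$ versus $gc_n(UT_2^F)=2^{n-1}(n-2)+2$ from Theorems \ref{base del Tideale D algebra} and \ref{Thm: Identità ordinarie}; these sequences differ for every $n\geq 1$, and since the generalized codimension sequence is an invariant of the generalized variety, equality of the two varieties is impossible. There is no real obstacle in this corollary: it is a bookkeeping consequence that packages the two almost-polynomial-growth theorems together with the elementary observation that the defining actions of $e_{22}$ and $e_{12}$ differ, which is precisely what renders the two $T_{UT_2}$-ideals incomparable.
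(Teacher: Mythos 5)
Your proposal is correct and takes essentially the same route as the paper: the paper likewise obtains the corollary by combining Theorems \ref{Thm: UT2D APG} and \ref{Thm: ordinary identities APG} with the mutual non-containment $\gI(UT_2^F)\nsubseteq \gI(UT_2^D)$ and $\gI(UT_2^D)\nsubseteq \gI(UT_2^F)$, which it reads off from the generating sets in Theorems \ref{base del Tideale D algebra} and \ref{Thm: Identità ordinarie}. Your explicit witnesses ($e_{22}x$, which survives in $UT_2^D$, and $[x_1,x_2]-[x_1,x_2,e_{22}]$, which collapses to $[x_1,x_2]$ in $UT_2^F$) and the alternative via the distinct codimension sequences merely make concrete what the paper leaves implicit.
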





	

\end{document}